\documentclass[12pt]{amsart}

\usepackage{amsfonts, amssymb, amsmath, amscd, amsthm, txfonts, graphicx, color, enumerate}

 \allowdisplaybreaks

\theoremstyle{plain}
\newtheorem{teo}{Theorem}[section]
\newtheorem{propo}[teo]{Proposition}
\newtheorem{lem}[teo]{Lemma}
\newtheorem{cor}[teo]{Corollary}

\theoremstyle{definition}
\newtheorem{defin}[teo]{Definition}
\newtheorem{ej}[teo]{Example}

\numberwithin{equation}{section}

\theoremstyle{remark}
\newtheorem{obs}[teo]{Remark}

\usepackage{tikz}

\frenchspacing

\textwidth=17cm
\textheight=23cm
\parindent=16pt
\oddsidemargin=-0.4cm
\evensidemargin=-0.4cm
\topmargin=-0.5cm
 
\begin{document}
\title{Openness for Anosov Families}
\author{Jeovanny de Jesus Muentes Acevedo}
\address{Instituto de Matem\'atica e 
Estat\'istica\\ 
Universidade de S\~ao Paulo\\
05508-090, Sao Paulo, Brazil}  \email{jeovanny@ime.usp.br}




\begin{abstract}
 Anosov families were introduced by A. Fisher and P. Arnoux motivated by generalizing the notion of   Anosov diffeomorphism defined on a compact Riemannian manifold. Roughly, an Anosov family is a two-sided sequence of diffeomorphisms (or non-stationary dynamical system)  with similar behavior to an Anosov diffeomorphisms.   We show that the set consisting of Anosov families is   an open subset of the set consisting of  two-sided sequences    of diffeomorphisms, which  is equipped with the strong topology (or Whitney topology). 
\end{abstract}

\subjclass[2010]{	37D20;   37C75; 37B55}
\keywords{Anosov families, Anosov diffeomorphism,  random   dynamical systems,  non-stationary dynamical systems, non-autonomous dynamical systems}

\maketitle

\section{Introduction}
The Anosov families  were introduced by P. Arnoux and A. Fisher in  \cite{alb}, motivated by generalizing the notion of   Anosov diffeomorphisms. Roughly, an Anosov family is a two-sided sequence of diffeomorphisms    $\textbf{\textit{f}}=(f_{i})_{i\in\mathbb{Z}}$ defined on  a two-sided sequence of compact Riemannian manifolds $(M_{i})_{i\in\mathbb{Z}}$, which has a similar behavior to an Anosov diffeomorphisms, that is, each  tangent bundle $TM_{i}$   has a splitting into two subbundles, called stable and unstable subbundles, where the elements in the stable subbundle are    contracted by  $D (f_{i+n-1}\circ\cdots\circ f_{i}) $ and the elements in the unstable subbundle are    contracted by   $D (f_{i-n}^{-1}\circ\cdots\circ f_{i-1}^{-1})$, for $n\geq 1$. The study   of   sequences of applications  is known in the literature with several different names: non-stationary dynamical systems, non-autonomous dynamical systems,  sequences of mappings, among other names (see \cite{alb}, \cite{Bakhtin}, \cite{Bakhtin2}, \cite{Jeo1}).

\medskip

 Other approaches dealing      sequences of   diffeomorphisms with hyperbolic  behavior   can be found   in \cite{Bakhtin}, \cite{Bakhtin2},  \cite{Mikko}, among other works.   One difference between the notion considered in this paper and the  considered in the works above  mentioned   is that  the   $f_{i}$'s of the  Anosov families do not necessarily  are    Anosov diffeomorphisms (see \cite{alb}, Example 3). Furthermore,  the $M_{i}$'s, although they are diffeomorphic, they are not necessarily isometric, thus, the hyperbolicity could be induced by the Riemannian metrics (see \cite{alb}, \cite{Jeo2}  for more detail).  

\medskip

Let \( \textbf{M}\) be the disjoint union of the $M_{i}$'s, for $i\in\mathbb{Z}$, and   $\mathcal{F}(\textbf{M})$   the set consisting of the families of $C^{1}$-diffeomorphisms on \textbf{M} 
equipped      with the \textit{strong topo\-logy} (see  Definition \ref{topol}).     We denote by $\mathcal{A}(\textbf{M})$ the subset of $\mathcal{F}(\textbf{M})$ consisting of Anosov families. Young in \cite{young} proved  that families consisting of    $C^{1+1}$ random small perturbations of an Anosov diffeomorphism of class $C^{2}$ are   Anosov
families (see Remark \ref{Young}).  
 The main goal of this paper, which is to prove  that $\mathcal{A}(\textbf{M})$  is open  in $\mathcal{F}(\textbf{M})$, is a generalization of this result, since, as we said, Anosov families do not necessarily consist of Anosov diffeomorphisms.   This fact  
 will be fundamental to prove  the structural stability of some elements in $\mathcal{A}(\textbf{M})$, considering the uniform conjugacies to be given  in   Definition \ref{definconjugacy} (see   \cite{Jeo3}).  The result in \cite{Jeo3}   generalizes Theorem 1.1 in \cite{Liu}, which proves the structural stability of random small perturbations of hyperbolic diffeomorphisms.

\medskip

In the next section we define the class of objects to be studied in this work. We define the   composition law for a two-sided sequence of diffeomorphisms, the strong topology    and a type of conjugations which work   for the class of     families of diffeomorphisms. Furthermore,  we introduce the notion of Anosov family and  we present some examples of such families.   In Section 3 we will see several properties that satisfy  the Anosov families.  
  It is important to keep fixed the Riemannian metric   on each $M_{i}$, since the notion of Anosov family depends on the Riemannian metric   (see \cite{alb}, Example 4).     
 Other examples and properties of Anosov families   can be found in \cite{alb},  \cite{Jeo2} and \cite{Jeo3}.  In Section 4 we will prove that each family close to an Anosov family satisfies the property of the invariant cones (see Lemma \ref{fred}).     
  This fact will be fundamental for showing  the openness  of Anosov families, which will be proved in Theorem \ref{teoremaprin}.

\section{Anosov Families: Definition,  Examples and Uniform Conjugacy}
Given a two-sided sequence of Riemannian manifolds   $M_{i}$   with Riemannian metric $\langle \cdot, \cdot\rangle_{i}$ for $i\in \mathbb{Z}$, consider the  \textit{disjoint union}  $$\textbf{M}=\coprod_{i\in \mathbb{Z}}{M_{i}}=\bigcup_{i\in \mathbb{Z}}{M_{i}\times{i}}.$$ The set $\textbf{M}$ will be called   \textit{total space} and the $M_{i}$ will be called  \textit{components}.   We  give the total space  $\textbf{M}$   the Riemannian metric $\langle \cdot, \cdot\rangle$    induced by  $\langle \cdot, \cdot\rangle_{i} $ setting 
\begin{equation}\label{metricariemannaian} 
\langle \cdot, \cdot\rangle|_{M_{i}}=\langle \cdot, \cdot\rangle_{i} \quad\text{ for }i\in \mathbb{Z},
\end{equation} 
and we will use the notation $(\textbf{M},\langle \cdot, \cdot\rangle)$ for point out that we are considering the Riemannian  metric given in  \eqref{metricariemannaian}.  
 We denote by $\Vert \cdot\Vert_{i}$ the induced norm by  $\langle\cdot,\cdot\rangle_{i}$ on $TM_{i}$ and we will take   $\Vert \cdot \Vert$ defined on  $\textbf{M}$  as    $\Vert \cdot\Vert|_{M_{i}}=\Vert \cdot\Vert_{i} $ for $i\in \mathbb{Z}$. If $d_{i}(\cdot,\cdot)$ is the metric on $M_{i}$ induced by $\langle \cdot, \cdot\rangle_{i}$, the total space is equipped with the metric
\begin{equation}\label{metricatotal} 
d(x,y)=   \begin{cases}
        \min\{1,d_{i}(x,y)\}  & \mbox{if } x,y\in M_{i} \\
        	1 & \mbox{if } x\in M_{i}, y\in M_{j} \text{ and }i\neq j. 
        \end{cases} 
\end{equation}

\begin{defin}\label{leidecomposicao} A  \textit{non-stationary dynamical system} (or \textit{n.s.d.s.}) $(\textbf{M},\langle\cdot,\cdot\rangle, \textbf{\textit{f}})$  is a map $\textbf{\textit{f}}:\textbf{M}\rightarrow \textbf{M}$  such that, for each $i\in\mathbb{Z}$, $\textbf{\textit{f}}|_{M_{i}}=f_{i}:M_{i}\rightarrow M_{i+1}$ is a  $C^{1}$-diffeomorphism. Sometimes we use the notation   $\textbf{\textit{f}}=(f_{i})_{i\in\mathbb{Z}}$. The $n$-th composition is defined    as   
\begin{equation*}
   \textbf{\textit{f}}_{ i} ^{n}:= \begin{cases}
  f_{i+n-1}\circ \cdots\circ f_{i}:M_{i}\rightarrow M_{i+n} & \mbox{if }n>0 \\
  f_{i-n}^{-1}\circ \cdots\circ f_{i-1}^{-1}:M_{i}\rightarrow M_{i-n} & \mbox{if }n<0 \\
	I_{i}:M_{i}\rightarrow M_{i} & \mbox{if }n=0,
        \end{cases}
\end{equation*}
where $I_{i}:M_{i}\rightarrow M_{i}$ is the identity on $M_{i}$ (see Figure \ref{Toros12}). 
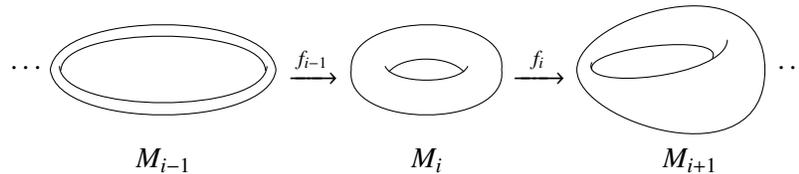
\begin{figure}[ht]
\begin{center}
\begin{tikzpicture}
\tikzstyle{point}=[circle,thick,draw=black,fill=black,inner sep=0pt,minimum width=1pt,minimum height=1pt]
\newcommand*{\xMin}{0}%
\newcommand*{\xMax}{6}%
\newcommand*{\yMin}{0}%
\newcommand*{\yMax}{6}%

\draw (-6.3,0.2) node[below] {\small \dots};
  
\draw (-6,0) .. controls (-5.9,0.8) and (-3.1,0.8) .. (-3,0);
\draw (-5.86,-0.01) .. controls (-5.85,0.6) and (-3.15,0.6) .. (-3.14,-0.01);
\draw (-6,0) .. controls (-5.9,-0.8) and (-3.1,-0.8) .. (-3,0);
\draw (-5.88,0.05) .. controls (-5.85,-0.6) and (-3.15,-0.6) .. (-3.12,0.05);
\draw (-4.5, -0.9) node[below] {\small $M_{i-1}$};

\draw (-2.5,0.45) node[below] {\small $\xrightarrow{f_{i-1}}$};

\draw (-2,0) .. controls (-2.05,0.8) and (0.1,0.8) .. (0,0);
\draw (-2,0) .. controls (-2.1,-0.8) and (0.1,-0.8) .. (0,0);
\draw (-1.5,-0.029) .. controls (-1.31,0.2) and (-0.7,0.2) .. (-0.5,-0.03);
\draw (-1.55,0.05) .. controls (-1.45,-0.2) and (-0.55,-0.2) .. (-0.45,0.05);

\draw (-1, -0.9) node[below] {\small $M_{i}$};

\draw (0.5,0.45) node[below] {\small  $\xrightarrow{\,\, \, f_{i}\, \, \,}$};

 \draw (1,0) .. controls (1,-0.7) and (3.5,-1.5) .. (3.5,0);
 \draw (1.2,0.1) .. controls (1,-0.3) and (3,-0.1) .. (3,0.4);

 \draw (1,0) .. controls (1,0.7) and (3.5,1.5) .. (3.5,0);
 \draw (1.2,0) .. controls (1.1,0.3) and (3,0.5) .. (2.8,0.15);
\draw (2.5, -0.9) node[below] {\small $M_{i+1}$};

 \draw (3.9,0.2) node[below] {\small \dots};
\end{tikzpicture}
\end{center}
\caption{A non-stationary dynamical system on a sequence of 2-torus endowed with different Riemannian metrics.}
\label{Toros12}
\end{figure} 
\end{defin}

One type of conjugacy that works  for the class of non-stationary dynamical systems is the \textit{uniform  conjugacy}:  
\begin{defin}\label{definconjugacy} A \textit{uniform  conjugacy} between two n.s.d.s. $  \textbf{\textit{f}}=(f_{i})_{i\in\mathbb{Z}}$ and $  \textbf{\textit{g}}=(g_{i})_{i\in\mathbb{Z}}$ on \textbf{M} is a map $\textbf{\textit{h}}:\textbf{M}\rightarrow \textbf{M}$, such that $\textbf{\textit{h}}|_{M_{i}}=h_{i}:M_{i}\rightarrow M_{i}$ is a  homeomorphism,    $(h_{i}:M_{i}\rightarrow M_{i})_{i\in\mathbb{Z}}$ and $(h_{i}^{-1}:M_{i}\rightarrow M_{i})_{i\in\mathbb{Z}}$ are  equicontinuous  families  and  $\textbf{\textit{h}}$  is a  \textit{topological  conjucacy} between the systems, i. e.,       \(h_{i+1}\circ f_{i}=g_{i}\circ h_{i}:M_{i}\rightarrow M_{i+1},\)   for every $ i\in \mathbb{Z}.$  This fact means that the following diagram  commutes: 
\[\begin{CD}
M_{-1}@>{f_{-1}}>> M_{0}@>{f_{0}}>>M_{1}@>{f_{1}}>>M_{2} \\
@V{\cdots}V{h_{-1}}V @VV{h_{0}}V @VV{h_{1}}V @VV{h_{2}\cdots}V\\
N_{-1}@>{g_{-1}}>> N_{0}@>{g_{0}}>>N_{1} @>{g_{1}}>>N_{2}
\end{CD} 
\]
In that case, we  will    say the families are \textit{uniformly conjugate}. 
\end{defin}

The
reason for   considering uniform    conjugacy instead of the topological conjugacy   is that   every n.s.d.s.  is topologically conjugate to the   n.s.d.s. whose maps are all the identity (see  \cite{alb}, Proposition 2.1).  Uniform conjugacies are also considered to characterize random dynamical systems (see \cite{Liu}). In \cite{Jeo1} we showed that the topological entropy for non-autonomous dynamical systems is a continuous map. The invariance of that entropy by uniform conjugacies is a fundamental tool to prove this result. 

\medskip
 
Consider     $\mathcal{F}(\textbf{M})=\{\textbf{\textit{f}}=(f_{i})_{i\in\mathbb{Z}}: f_{i}:M_{i}\rightarrow M_{i+1} \text{ is a  }C^{1}\text{-diffeomorphism}\}.$   
We endow $\mathcal{F}(\textbf{M})$  with the \textit{strong topology}:
\begin{defin}\label{topol}  
Let $\varepsilon=(\varepsilon_{i})_{i\in \mathbb{Z}}$ be a   sequence of positive numbers and  $\textbf{\textit{f}}\in \mathcal{F}(\textbf{M})$.  The set $$B(\textbf{\textit{f}},\varepsilon)=  \{\textbf{\textit{g}}\in \mathcal{F}(\textbf{M}): d_{\textbf{D}_{i}}(f_{i},g_{i})<\varepsilon_{i}\text{ for all }i\} $$ is called a \textit{strong basic neighborhood} of $\textbf{\textit{f}}$, where $d_{\textbf{D}_{i}}(\cdot,\cdot)$ is the  $C^{1}$-metric   on  $\textbf{D}_{i}= \text{Diff}^{1}(M_{i},M_{i+1})$, the set consisting of $C^{1}$-diffeomorphisms on $M_{i}$ to $M_{i+1}.$   The \textit{strong topology} (or \textit{Whitney topology}) is generated by the strong basic neighborhoods  of each $\textbf{\textit{f}}\in \mathcal{F}(\textbf{M})$.
  \end{defin}

\begin{defin}\label{estruturalmenteestavel} A subset  $\mathcal{A}$ of  $\mathcal{F}(\textbf{M})$  is open  if for each $\textbf{\textit{f}}\in \mathcal{A}$ there exists  $\varepsilon=(\varepsilon_{i})_{i\in \mathbb{Z}}$ such that $ B(\textbf{\textit{f}},\varepsilon)\subseteq \mathcal{A}$.  Furthermore, if for each $\textbf{\textit{f}}\in \mathcal{A}$ there is  $\varepsilon=(\varepsilon_{i})_{i\in \mathbb{Z}}$ such that, for any  $\textbf{\textit{g}}\in B(\textbf{\textit{f}},\varepsilon)$, \textbf{\textit{g}} is  uniformly  conjugate to \textbf{\textit{f}},  then we say that  $\mathcal{A}$ is  
\textit{structurally stable}.  
\end{defin}

\begin{defin}\label{anosovfamily}    A n.s.d.s \textbf{\textit{f}}    on \textbf{M}  is called an \textit{Anosov family} if:
\begin{enumerate}[i.]
\item the tangent bundle $T\textbf{M}$ has a continuous splitting   $E^{s}\oplus E^{u}$ which is  $D\textbf{\textit{f}}$-\textit{invariant}, i. e., for each $p\in \textbf{M}$, 
 $T_{p}\textbf{M}=E^{s}_{p}\oplus E^{u}_{p}$ with $D_{p}\textbf{\textit{f}}(E^{s}_{p})= E^{s}_{\textbf{\textit{f}}(p)}$ and $D_{p}\textbf{\textit{f}}(E^{u}_{p})= E^{u}_{\textbf{\textit{f}}(p)}$, where $T_{p}\textbf{M} $ is the    tangent space at $p;$
\item there exist constants $\lambda \in (0,1)$ and $c>0$ such that for each  $i\in \mathbb{Z}$, $n\geq 1$,    and $p\in M_{i}$, 
we have: \[\Vert D_{p}(\textbf{\textit{f}}_{i}^{n})(v)\Vert \leq c\lambda^{n}\Vert v\Vert \text{ if   }v\in E_{p}^{s}\quad\text{and}\quad \Vert D_{p}(\textbf{\textit{f}}_{i}^{-n}) (v)\Vert \leq c\lambda^{n}\Vert v\Vert  \text{ if }v\in E_{p}^{u}.\]
\end{enumerate}
The  subspaces $E^{s}_{p}$ and $E^{u}_{p}$ are called     stable and unstable subspaces, respectively.
\end{defin}
The set consisting of Anosov family on $(\textbf{M} ,\langle\cdot,\cdot\rangle)$ will be denoted by $\mathcal{A}(\textbf{M})$. 
If we can take
$c=1$ we say the family is \textit{strictly Anosov}.

 \medskip

A clear example of an   Anosov family is the \textit{constant family associated} to an  Anosov diffeomorphism   (see \cite{alb}, Definition 2.2).  
  Is well-known   the notion   of Anosov diffeomorphism  does not  depend  on the   Riemannian  metric   on the manifold (see \cite{Shub}).    However,   Example 4 in  \cite{alb} shows that  suitably changing the metric on each   $M_{i}$  the notion of Anosov family could not  be  satisfied.

\begin{ej} Let $F$ be a hyperbolic linear  cocycle   defined by $A:X\rightarrow SL(\mathbb{Z},d)$ over a homeomorphism $\phi:X\rightarrow X$ on a compact metric space $X$ (see \cite{Viana}). For each $x\in X$, the family $(A(f^{n}(x)))_{n\in\mathbb{Z}}$ defined on  $M_{i}=\mathbb{R}^{d}/\mathbb{Z}^{d}$, the torus $d$-dimensional equipped with the Riemannian metric inherited from $\mathbb{R}^{d}$, determines an Anosov family. 
\end{ej}

\begin{obs}\label{Young} Let $\phi:M\rightarrow M$ be an  Anosov diffeomorphism  of class $C^{2}$ on a compact Riemannian manifold   
$M$  and  $\beta >0$ such that  $L(D\phi)<\beta$,  where $L(D\phi)$ is a Lipchitz constant of the derivative application $x\mapsto D_{x}\phi$. For $\alpha>0$, take $$\Omega_{\alpha,\beta}(\phi)=\{\psi\in C^{1}(M):  d(\phi,\psi)\leq \alpha \text{ and } L(D\psi)\leq \beta\},$$
 where $d(\cdot,\cdot)$ is the    $C^{1}$-metric on  $\text{Diff}^{1}(M)$.  If $\alpha $ is small enough,  any sequence $(\psi_{i})_{i\in \mathbb{Z}}$ in $\Omega_{\alpha,\beta}(\phi)$ defines  an   Anosov family in $\textbf{M}=\coprod_{i\in \mathbb{Z}}{M}$ 
(see  \cite{young}, Proposition 2.2).
Consequently,    the set consisting of   the constant families  associated to   Anosov diffeomorphisms of class $C^{2}$ is open in $\mathcal{F}(\textbf{M})$. \end{obs}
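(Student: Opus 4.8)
The plan is to deduce the statement from Young's Proposition 2.2, which I may invoke as the first part of the remark. Let $\phi$ be a $C^{2}$ Anosov diffeomorphism on $M$ and let $\textbf{\textit{f}}=(\phi)_{i\in\mathbb{Z}}$ be the associated constant family on $\textbf{M}=\coprod_{i\in\mathbb{Z}}M$. First I would record that $\textbf{\textit{f}}\in\mathcal{A}(\textbf{M})$: the stable/unstable splitting $T_{x}M=E^{s}_{x}\oplus E^{u}_{x}$ of $\phi$ induces on each copy $M_{i}=M$ a continuous $D\textbf{\textit{f}}$-invariant splitting of $T\textbf{M}$, and since $\textbf{\textit{f}}_{i}^{n}=\phi^{n}$ as a map $M\to M$, the hyperbolicity constants $c,\lambda$ of $\phi$ serve verbatim as the constants required in Definition \ref{anosovfamily}. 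Next, fix $\beta>L(D\phi)$ and let $\alpha>0$ be the radius furnished by Young's proposition, so that \emph{every} sequence $(\psi_{i})_{i\in\mathbb{Z}}$ with $\psi_{i}\in\Omega_{\alpha,\beta}(\phi)$ for all $i$ is an Anosov family. This exhibits a large collection of Anosov neighbors of $\textbf{\textit{f}}$.

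The main obstacle is that this $\Omega_{\alpha,\beta}$-collection is \emph{not} a strong basic neighborhood in the sense of Definition \ref{topol}. A strong neighborhood $B(\textbf{\textit{f}},\varepsilon)$ controls only the $C^{1}$-distances $d_{\textbf{D}_{i}}(\phi,\psi_{i})<\varepsilon_{i}$; it imposes no uniform bound $L(D\psi_{i})\le\beta$ on the Lipschitz constants of the derivatives, nor does it force the $\psi_{i}$ to be of class $C^{2}$. Since a $C^{1}$-small perturbation of $\phi$ may have an arbitrarily large derivative-Lipschitz constant, one has $B(\textbf{\textit{f}},\varepsilon)\not\subseteq\{(\psi_{i}):\psi_{i}\in\Omega_{\alpha,\beta}(\phi)\}$ for every choice of $\varepsilon$. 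Thus Young's proposition, whose hypothesis is the strictly stronger $C^{1+1}$ condition, does not by itself place a strong neighborhood of $\textbf{\textit{f}}$ inside $\mathcal{A}(\textbf{M})$; it yields interiority only relative to the finer topology in which the Lipschitz bound $\beta$ is held fixed.

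To obtain genuine openness in the strong topology I would abandon the $C^{1+1}$ route and argue through invariant cone fields, which is also the strategy underlying Lemma \ref{fred} and Theorem \ref{teoremaprin}. Since $\textbf{\textit{f}}$ is Anosov, after passing to an adapted metric it preserves a continuous field of unstable cones that $D\textbf{\textit{f}}$ maps into themselves while expanding their vectors, and a field of stable cones that $D\textbf{\textit{f}}^{-1}$ maps into themselves while expanding their vectors, all by definite uniform factors. These cone inclusions and the attached expansion/contraction inequalities involve \emph{only} the first derivatives $D\psi_{i}$, hence they are stable conditions in the $C^{1}$-metric: I would choose $\varepsilon=(\varepsilon_{i})$ with each $\varepsilon_{i}>0$ so that every $\textbf{\textit{g}}=(\psi_{i})\in B(\textbf{\textit{f}},\varepsilon)$ preserves the same cone fields with slightly weaker but still uniform rates. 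One then recovers the invariant splitting of $\textbf{\textit{g}}$ as the maximal invariant subbundles lying in the cones and reads off the uniform estimates of Definition \ref{anosovfamily}, so that $\textbf{\textit{g}}\in\mathcal{A}(\textbf{M})$ and $\textbf{\textit{f}}$ is interior. I expect the delicate point to be keeping all constants---cone widths, contraction/expansion rates, and the eventual $c,\lambda$---uniform in $i$ simultaneously, so that the $\varepsilon_{i}$ can be taken positive for every $i$; here this uniformity is automatic because $\textbf{\textit{f}}$ is constant, and it is exactly the feature that must be worked for in the non-constant families of Theorem \ref{teoremaprin}.
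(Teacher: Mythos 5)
Your proposal is correct, and it diverges from the paper exactly where the paper is thinnest: the paper gives this remark no proof at all beyond the citation of Young's Proposition 2.2, and presents the final sentence as an immediate consequence. Your scruple about that step is well founded. Membership in $\Omega_{\alpha,\beta}(\phi)$ imposes the bound $L(D\psi)\leq\beta$, which is not an open condition for the $C^{1}$-metric of Definition \ref{topol}: a $C^{1}$-small perturbation of $\phi$ need not have a Lipschitz derivative at all, so indeed $B(\textbf{\textit{f}},\varepsilon)\not\subseteq\{(\psi_{i}):\psi_{i}\in\Omega_{\alpha,\beta}(\phi)\}$ for every $\varepsilon$, and Young's result by itself yields interiority only within the $C^{1+1}$ class with the Lipschitz bound held fixed --- strictly less than what the remark's ``consequently'' asserts in the strong topology (the remark should in any case be read as saying that each such constant family lies in the interior of $\mathcal{A}(\textbf{M})$, since a strong neighborhood always contains non-constant families). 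Your repair via invariant cone fields is precisely the machinery of Sections 4 and 5 of the paper (Lemmas \ref{primeirolem}, \ref{limitadocones12}, \ref{limitadocontract12}, \ref{fred}, Proposition \ref{propoaberang}, Theorem \ref{teoremaprin}) specialized to the constant family: that family satisfies the property of the angles automatically, by compactness of $M$ and transversality of the splitting of $\phi$, and since all $f_{i}=\phi$ the constants $\beta_{i}$, $\sigma_{i}$, $\varepsilon_{i}$ can be chosen independent of $i$, which yields even a \emph{uniform} neighborhood of Anosov families --- a point the paper itself records in its closing remark. What the two routes buy: the paper's citation is short and situates the main theorem historically, but as literally stated it does not prove strong-topology openness; your route proves the full claim, needs only $C^{1}$ regularity (so the $C^{2}$ hypothesis and the constant $\beta$ become superfluous), and is in substance Theorem \ref{teoremaprin} in the special case where uniformity in $i$ is free. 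The only caveat is expository rather than logical: since the cone machinery appears after the remark, your argument is best cast as a corollary of Theorem \ref{teoremaprin} rather than a self-contained proof at the remark's location; this creates no circularity, because nothing in Sections 3--5 relies on the remark.
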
   
Using the above fact we have:
 
\begin{ej} Given $\alpha\in \mathbb{R}$, consider    $\phi_{\alpha}:\mathbb{T}^{2}\rightarrow \mathbb{T}^{2}$ defined by  \[\phi_{\alpha}(x,y)=(2x+y-(1+\alpha)\sin x  \text{ mod } 2\pi,x+y-(1+\alpha)\sin x \text{ mod } 2\pi ).\]
For all $\alpha\in [-1,0),$  $\phi_{\alpha}$ is  an Anosov diffeomorphism (see    \cite{luisb}). 
We have that given $\alpha^{\star} \in [-1,0)$  there exists   $\varepsilon>0$ such that, if $(\alpha_{i})_{i\in\mathbb{Z}}$ is a sequence in $[-1,0)$ with $|\alpha_{i}-\alpha^{\star}|<\varepsilon$, then $(f_{i})_{i\in\mathbb{Z}}$ is an  Anosov family, where $f_{i}=\phi_{\alpha_{i}}$ for   $i\in \mathbb{Z}$. 
\end{ej}
  
 The existence of Anosov diffeomorphisms $\phi:M\rightarrow M$ imposes strong restrictions on the manifold $M$.  All known examples of Anosov diffeomorphisms are defined on \textit{infranilmanifolds} (see      
 \cite{luisb}, \cite{Shub},  \cite{Viana}). The circle $\mathbb{S}^{1}=\{x\in\mathbb{R}^{2}:\Vert x\Vert=1\}$ does not admit any Anosov diffeomorphism.  In  \cite{Jeo2} we show that   $\textbf{S}^{1}$ does not admit  Anosov families in the following sense: let $\textbf{M}=\bigcup_{i\in\mathbb{Z}}M_{i}$ where $M_{i}=\mathbb{S}^{1}\times\{i\}$   equipped with the Riemannian metric inherited from $\mathbb{R}^{2}$ for each $i$.  Thus, there is not    any Anosov family on $\textbf{M}$. 
As mentioned above, the Anosov families are not necessarily formed by Anosov diffeomorphisms. Then, a natural question that arises from the notion of Anosov families is: which compact Riemannian manifolds admit Anosov families?

\section{Some Properties of the    Anosov Families}
 
We now show some properties that the Anosov families  satisfy and that will be used in the rest of the work. 
In this section,  if we do not say otherwise,  $(\textbf{M},\langle\cdot,\cdot\rangle,\textbf{\textit{f}})$ will represent an  Anosov family with constants $\lambda\in (0,1)$ and $c\geq1$.
Sometimes  we will omit the index  $i$ of $f_{i}$ if it is clear that we are considering the $i$-th diffeomorphism of   \textbf{\textit{f}}. 

 \medskip
 
 In \cite{alb}, Proposition 2.12, is shown for an Anosov family the splitting $T_{p}\textbf{M}=E_{p}^{s}\oplus E_{p}^{u}$ is unique.   Actually, we have:
 \begin{lem}\label{unicidadesubs} For each $p\in M_{i}$  we have
\begin{enumerate}[i.]
\item $E_{p} ^{s}=\{v\in T_{p}M_{i}:  \Vert D_{p}(
\textbf{\textit{f}}^{\, n})  (v)\Vert \text{ is bounded, for }n\geq 1\}.$
\item $E_{p} ^{u}=\{v\in T_{p}M_{i}:  \Vert D_{p}(\textbf{\textit{f}}^{-n})  (v)\Vert \text{ is bounded, for }n\geq 1\}.$
\end{enumerate}
\end{lem}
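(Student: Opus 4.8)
The plan is to prove part (i) only, since part (ii) then follows by the symmetric argument, interchanging the roles of $E^{s}$ and $E^{u}$ and of forward and backward iterates. Write $S_{p}=\{v\in T_{p}M_{i}:\Vert D_{p}(\textbf{\textit{f}}^{\,n})(v)\Vert\text{ is bounded for }n\geq 1\}$ for the set on the right-hand side; the goal is to show $E_{p}^{s}=S_{p}$. The inclusion $E_{p}^{s}\subseteq S_{p}$ is immediate from Definition \ref{anosovfamily}(ii): for $v\in E_{p}^{s}$ one has $\Vert D_{p}(\textbf{\textit{f}}^{\,n})(v)\Vert\leq c\lambda^{n}\Vert v\Vert\leq c\Vert v\Vert$ for every $n\geq 1$, so the sequence is bounded (in fact it tends to $0$).

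For the reverse inclusion I would take $v\in S_{p}$ and decompose it along the invariant splitting as $v=v^{s}+v^{u}$ with $v^{s}\in E_{p}^{s}$ and $v^{u}\in E_{p}^{u}$, aiming to show $v^{u}=0$. The key step, and the one requiring a little care, is to produce a lower bound showing that the unstable component grows forward at an exponential rate. Since $D\textbf{\textit{f}}$ preserves the splitting, $w_{n}:=D_{p}(\textbf{\textit{f}}_{i}^{n})(v^{u})\in E_{q}^{u}$, where $q=\textbf{\textit{f}}_{i}^{n}(p)\in M_{i+n}$. Using the cocycle identity $(\textbf{\textit{f}}_{i}^{n})^{-1}=\textbf{\textit{f}}_{i+n}^{-n}$ and the chain rule, $D_{q}(\textbf{\textit{f}}_{i+n}^{-n})(w_{n})=v^{u}$. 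Applying the unstable estimate of Definition \ref{anosovfamily}(ii) at the point $q$ to the vector $w_{n}\in E_{q}^{u}$ gives $\Vert v^{u}\Vert=\Vert D_{q}(\textbf{\textit{f}}_{i+n}^{-n})(w_{n})\Vert\leq c\lambda^{n}\Vert w_{n}\Vert$, that is,
\[
\Vert D_{p}(\textbf{\textit{f}}_{i}^{n})(v^{u})\Vert=\Vert w_{n}\Vert\geq \frac{\lambda^{-n}}{c}\,\Vert v^{u}\Vert .
\]

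Finally I would combine this with the contraction $\Vert D_{p}(\textbf{\textit{f}}_{i}^{n})(v^{s})\Vert\leq c\lambda^{n}\Vert v^{s}\Vert$ through the reverse triangle inequality, which is valid even though $E_{p}^{s}$ and $E_{p}^{u}$ need not be orthogonal:
\[
\Vert D_{p}(\textbf{\textit{f}}_{i}^{n})(v)\Vert\geq \Vert D_{p}(\textbf{\textit{f}}_{i}^{n})(v^{u})\Vert-\Vert D_{p}(\textbf{\textit{f}}_{i}^{n})(v^{s})\Vert\geq \frac{\lambda^{-n}}{c}\,\Vert v^{u}\Vert-c\lambda^{n}\Vert v^{s}\Vert .
\]
If $v^{u}\neq 0$, the right-hand side tends to $+\infty$ as $n\to\infty$ (because $\lambda\in(0,1)$ forces $\lambda^{-n}\to\infty$), contradicting $v\in S_{p}$. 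Hence $v^{u}=0$ and $v=v^{s}\in E_{p}^{s}$, completing part (i). The only genuinely delicate point is the bookkeeping in the lower bound, namely correctly identifying $\textbf{\textit{f}}^{-n}$ based at $q=\textbf{\textit{f}}_{i}^{n}(p)$ as the inverse of $\textbf{\textit{f}}^{\,n}$ based at $p$ and invoking the $D\textbf{\textit{f}}$-invariance of $E^{u}$; everything else is routine.
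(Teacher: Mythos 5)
Your proof is correct and takes essentially the same approach as the paper's: decompose $v=v^{s}+v^{u}$ along the invariant splitting and combine the reverse triangle inequality with the lower bound $\Vert D_{p}(\textbf{\textit{f}}_{i}^{n})(v^{u})\Vert\geq c^{-1}\lambda^{-n}\Vert v^{u}\Vert$ to force $v^{u}=0$ when the forward orbit of $v$ under the derivative is bounded. The only difference is presentational: you justify that lower bound explicitly (applying the unstable estimate at $q=\textbf{\textit{f}}_{i}^{n}(p)$ to $w_{n}=D_{p}(\textbf{\textit{f}}_{i}^{n})(v^{u})\in E_{q}^{u}$ via the identity $\textbf{\textit{f}}_{i+n}^{-n}\circ\textbf{\textit{f}}_{i}^{n}=I_{i}$), a step the paper asserts without comment.
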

\begin{proof} We  will prove   i.  Set  $B_{p} ^{s}=\{v\in T_{p}M_{i}:  \sup_{n\geq1}\Vert D_{p}(\textbf{\textit{f}}^{n})  (v)\Vert <+\infty\}.$  It is clear that $E_{p}^{s}\subseteq B_{p} ^{s}$. Suppose   there exists     $v \in T_{p}M_{i}$ such that $v\notin E ^{s}_{p}.$ Thus $v =v_{s}+v_{u}$, for some $v_{s}\in E ^{s}_{p}$ and  $v_{u}\in E ^{u}_{p}$ with $v_{u}\neq0$. Therefore, we have   
$\Vert D_{p}(\textbf{\textit{f}}^{n})  (v)\Vert  \geq      c^{-1}\lambda^{-n}\Vert v_{u}\Vert - c \lambda^{n}\Vert v_{s}\Vert ,$
where $\Vert D_{p}(\textbf{\textit{f}}^{n}) (v )\Vert\rightarrow +\infty$, that is, $v\notin B_{p} ^{s}$. Thus $ B^{s}_{p}\subseteq E^{s}_{p}.$  
\end{proof}

\begin{defin}\label{conesestaveiseinstaveis1}
For $p\in \textbf{M}$ and $  \alpha >0$, set 
\begin{align*} 
K_{\alpha,\textbf{\textit{f}},p}^{s} &=\{(v_{s},v_{u})\in E_{p}^{s}\oplus E_{p}^{u}: \Vert v_{u}\Vert< \alpha \Vert v_{s} \Vert\}\cup\{(0,0)\}:=  \textit{stable } \alpha\textit{-cone of }  \textbf{\textit{f}} \textit{ at}  p,  \\
K_{\alpha,\textbf{\textit{f}},p}^{u} &=\{(v_{s},v_{u})\in E_{p}^{s}\oplus E_{p}^{u}: \Vert v_{s}\Vert< \alpha \Vert v_{u}\Vert\}\cup\{(0,0)\}:= \textit{unstable }   \alpha \textit{-cone of } \textbf{\textit{f}} \text{ at}  p.
\end{align*}
(see Figure \ref{coneses1}). 
\begin{figure}[ht] 
\begin{center}

\begin{tikzpicture}
\draw[black!7,fill=black!17, ultra thin] (-2.4,-0.4)rectangle (1.4,3.4);
\draw[black, fill=black!47, very thin] (-0.5,1.5) -- (-1.5,3.4) -- (0.5,3.4) -- cycle;
\draw[black, fill=black!47, very thin] (-0.5,1.5) -- (-1.5,-0.4) -- (0.5,-0.4) -- cycle;
\draw[black, fill=black!47, very thin] (-0.5,1.5) -- (1.4,2.5) -- (1.4,0.5) -- cycle;
\draw[black, fill=black!47, very thin] (-0.5,1.5) -- (-2.4,0.5) -- (-2.4,2.5) -- cycle;
 
 \draw[<->] (-0.5,-0.5) -- (-0.5,3.5);
 \draw[<->] (-2.5,1.5) -- (1.5,1.5);
\draw (-0.7,4) node[below] {\quad{\scriptsize $E_{p}^{u}$}};
\draw (1.6,1.7) node[below] {\quad{\scriptsize $E_{p}^{s}$}};
\draw (1.8,3.5) node[below] {\scriptsize $T_{p}M$};
\draw (0.2,3.9) node[below] {\quad{\scriptsize $K_{\alpha,\textbf{\textit{f}},p}^{u}$}};
\draw (1.7,2.3) node[below] {\quad{\scriptsize $K_{\alpha,\textbf{\textit{f}},p}^{s}$}};
 \end{tikzpicture} 

\end{center}
\caption{Stable and unstable $\alpha$-cones     at $p$.}
\label{coneses1}
\end{figure}
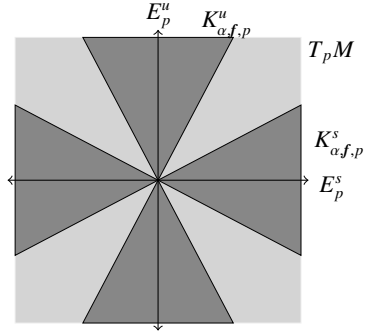 
\end{defin} 

 Taking a suitable $\alpha$, the following lemma shows that the cones are invariant by the derivative of    the family and, in addition, the derivative  of the   family restricted  $K_{\alpha,\textbf{\textit{f}},p}^{u}$ is an  expansion  and restricted to $K_{\alpha,\textbf{\textit{f}},p}^{s}$ is a contraction:

\begin{lem}\label{lemacones} Suppose that $\textbf{\textit{f}}$ is a strictly Anosov family. Fix   $\alpha\in (0,\frac{1-\lambda}{1+\lambda})$ and take $\lambda^{\prime}=\lambda\frac{1+\alpha}{1-\alpha}<1$. Thus:
\begin{enumerate}[i.]
\item $ D_{p}\textbf{\textit{f}}(K_{\alpha,\textbf{\textit{f}},p}^{u})\subseteq K_{\alpha,\textbf{\textit{f}},\textbf{\textit{f}}(p)}^{u}$. Furthermore,   $\Vert D_{p}\textbf{\textit{f}}(v)\Vert \geq \frac{1}{\lambda^{\prime}}  \Vert v\Vert$ for   $v\in  K_{\alpha,\textbf{\textit{f}},p}^{u}$.
\item $ D_{\textbf{\textit{f}}(p)}\textbf{\textit{f}}^{-1}(K_{\alpha,\textbf{\textit{f}},\textbf{\textit{f}}(p)}^{s})\subseteq K_{\alpha,\textbf{\textit{f}},p}^{s}$. Furthermore,    $\Vert D_{\textbf{\textit{f}}(p)}\textbf{\textit{f}}^{-1}(v)\Vert \geq \frac{1}{\lambda^{\prime}}  \Vert v\Vert$  for $v\in  K_{\alpha,\textbf{\textit{f}},\textbf{\textit{f}}(p)}^{s}$.
 \end{enumerate}
\end{lem}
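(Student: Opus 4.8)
The plan is to reduce everything to single-step estimates on the invariant subspaces and then feed them into the usual cone computation. First I would record the one-step bounds coming from Definition \ref{anosovfamily} with $c=1$ (the strictly Anosov hypothesis) and $n=1$: for $v\in E_p^s$ one has $\Vert D_p\textbf{\textit{f}}(v)\Vert\leq \lambda\Vert v\Vert$, and for $w\in E_{\textbf{\textit{f}}(p)}^u$ one has $\Vert D_{\textbf{\textit{f}}(p)}\textbf{\textit{f}}^{-1}(w)\Vert\leq\lambda\Vert w\Vert$. The second bound, applied to $w=D_p\textbf{\textit{f}}(v_u)$ for $v_u\in E_p^u$ (which lies in $E_{\textbf{\textit{f}}(p)}^u$ by $D\textbf{\textit{f}}$-invariance of the splitting), together with $D_{\textbf{\textit{f}}(p)}\textbf{\textit{f}}^{-1}\circ D_p\textbf{\textit{f}}=\mathrm{id}$, yields the one-step expansion $\Vert D_p\textbf{\textit{f}}(v_u)\Vert\geq \lambda^{-1}\Vert v_u\Vert$ on the unstable subspace. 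These three inequalities, together with the invariance $D_p\textbf{\textit{f}}(E_p^{s})=E_{\textbf{\textit{f}}(p)}^{s}$ and $D_p\textbf{\textit{f}}(E_p^{u})=E_{\textbf{\textit{f}}(p)}^{u}$, are all I need for part i.

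For the cone invariance in i, I would take $v=(v_s,v_u)\in K^u_{\alpha,\textbf{\textit{f}},p}$ with $v\neq(0,0)$, so $\Vert v_s\Vert<\alpha\Vert v_u\Vert$, and write $D_p\textbf{\textit{f}}(v)=w_s+w_u$ with $w_s=D_p\textbf{\textit{f}}(v_s)\in E^s_{\textbf{\textit{f}}(p)}$ and $w_u=D_p\textbf{\textit{f}}(v_u)\in E^u_{\textbf{\textit{f}}(p)}$. Combining $\Vert w_s\Vert\leq\lambda\Vert v_s\Vert$ and $\Vert w_u\Vert\geq\lambda^{-1}\Vert v_u\Vert$ gives $\Vert w_s\Vert< \lambda^2\alpha\Vert w_u\Vert<\alpha\Vert w_u\Vert$, which is exactly the condition $D_p\textbf{\textit{f}}(v)\in K^u_{\alpha,\textbf{\textit{f}},\textbf{\textit{f}}(p)}$ (the case $v=(0,0)$ being trivial). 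For the expansion estimate I use the reverse triangle inequality $\Vert D_p\textbf{\textit{f}}(v)\Vert\geq \Vert w_u\Vert-\Vert w_s\Vert$ and the crude bound $\lambda\Vert v_s\Vert\leq\lambda^{-1}\Vert v_s\Vert$ to obtain $\Vert D_p\textbf{\textit{f}}(v)\Vert\geq \lambda^{-1}(\Vert v_u\Vert-\Vert v_s\Vert)$. Since $\Vert v_s\Vert<\alpha\Vert v_u\Vert$ forces $\Vert v\Vert\leq(1+\alpha)\Vert v_u\Vert$, one deduces $\Vert v_u\Vert-\Vert v_s\Vert>\tfrac{1-\alpha}{1+\alpha}\Vert v\Vert$, and hence $\Vert D_p\textbf{\textit{f}}(v)\Vert>\lambda^{-1}\tfrac{1-\alpha}{1+\alpha}\Vert v\Vert=\tfrac{1}{\lambda'}\Vert v\Vert$; this is precisely where the form $\lambda'=\lambda\frac{1+\alpha}{1-\alpha}$ and the requirement $\alpha<\frac{1-\lambda}{1+\lambda}$ (equivalent to $\lambda'<1$) enter.

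Part ii is the mirror image under time reversal. Here I would use the one-step bounds $\Vert D_{\textbf{\textit{f}}(p)}\textbf{\textit{f}}^{-1}(v_u)\Vert\leq\lambda\Vert v_u\Vert$ on $E^u$ (directly from Definition \ref{anosovfamily} with $n=1$) and $\Vert D_{\textbf{\textit{f}}(p)}\textbf{\textit{f}}^{-1}(v_s)\Vert\geq\lambda^{-1}\Vert v_s\Vert$ on $E^s$ (obtained from $\Vert D_p\textbf{\textit{f}}(\cdot)\Vert\leq\lambda\Vert\cdot\Vert$ on $E^s$ exactly as above, now inverting the roles of contraction and expansion), together with the invariance of the splitting under $D\textbf{\textit{f}}^{-1}$. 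Feeding these into the identical cone computation, with the roles of $v_s$ and $v_u$ interchanged, yields both the inclusion $D_{\textbf{\textit{f}}(p)}\textbf{\textit{f}}^{-1}(K^s_{\alpha,\textbf{\textit{f}},\textbf{\textit{f}}(p)})\subseteq K^s_{\alpha,\textbf{\textit{f}},p}$ and the lower bound $\Vert D_{\textbf{\textit{f}}(p)}\textbf{\textit{f}}^{-1}(v)\Vert\geq\tfrac{1}{\lambda'}\Vert v\Vert$.

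The computations are elementary; the only delicate points are the correct derivation of the one-step expansion on $E^u$ from the backward contraction, where strictness ($c=1$) is essential since a general $c$ would leave a factor $c\lambda$ that need not be $<1$ in a single step, and the fact that $v_s$ and $v_u$ combine through the triangle inequality rather than orthogonally, so the bound $\Vert v\Vert\leq(1+\alpha)\Vert v_u\Vert$ must replace any Pythagorean identity. I expect the main obstacle to be purely bookkeeping: keeping the base points $p$ and $\textbf{\textit{f}}(p)$, and the domains and codomains of $D_p\textbf{\textit{f}}$ and $D_{\textbf{\textit{f}}(p)}\textbf{\textit{f}}^{-1}$, straight throughout.
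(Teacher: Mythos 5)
Your proof is correct and follows essentially the same route as the paper: the one-step bounds $\Vert D_{p}\textbf{\textit{f}}(v_{s})\Vert\leq\lambda\Vert v_{s}\Vert$ and $\Vert D_{p}\textbf{\textit{f}}(v_{u})\Vert\geq\lambda^{-1}\Vert v_{u}\Vert$ (the latter from backward contraction with $c=1$), the chain $\Vert D_{p}\textbf{\textit{f}}(v_{s})\Vert\leq\lambda\alpha\Vert v_{u}\Vert\leq\lambda^{2}\alpha\Vert D_{p}\textbf{\textit{f}}(v_{u})\Vert\leq\alpha\Vert D_{p}\textbf{\textit{f}}(v_{u})\Vert$ for cone invariance, and the reverse triangle inequality yielding the lower bound $\frac{1-\alpha}{\lambda(1+\alpha)}\Vert v\Vert=\frac{1}{\lambda^{\prime}}\Vert v\Vert$. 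You merely spell out steps the paper leaves implicit (the derivation of the unstable expansion and the estimate $\Vert v\Vert\leq(1+\alpha)\Vert v_{u}\Vert$), and your part ii is the same time-reversal argument the paper invokes with ``analogously.''
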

\begin{proof} 
For   $(v_{s},v_{u})\in  K_{\alpha,\textbf{\textit{f}},p}^{u}$ we have  \[\Vert D_{p}\textbf{\textit{f}}(v_{s})\Vert \leq \lambda \Vert v_{s}\Vert \leq \lambda\alpha \Vert v_{u}\Vert \leq \lambda^{2}\alpha\Vert D_{p}\textbf{\textit{f}}(v_{u})\Vert\leq \alpha\Vert D_{p}\textbf{\textit{f}}(v_{u})\Vert. \]
Therefore $ D_{p}\textbf{\textit{f}}(K_{\alpha,\textbf{\textit{f}},p}^{u})\subseteq K_{\alpha,\textbf{\textit{f}},\textbf{\textit{f}}(p)}^{u}$. On the other hand,  we have
\begin{equation*}\Vert D_{p}\textbf{\textit{f}}(v_{s},v_{u})\Vert \geq \Vert D_{p}\textbf{\textit{f}}(v_{u})\Vert- \Vert D_{p}\textbf{\textit{f}}(v_{s})\Vert \geq\frac{1-\alpha}{\lambda(1+\alpha)} \Vert(v_{s},v_{u})\Vert, 
\end{equation*}
and this fact proves i. 
The part ii. can be proved analogously.
\end{proof}

Next proposition proves    the continuity of the splitting     $E^{s}\oplus E^{u}$ can be obtained  from both the condition ii. in  
	Definition \ref{anosovfamily} and   the $D\textbf{\textit{f}}$-invariance of the splitting.    We adapt the ideas of the proof  of   Proposition 2.2.9 in  \cite{luisb} (which is done for diffeomorphisms defined on compact Riemannian manifolds) to show the following result.

\begin{propo}\label{continuidade} Let $  \textbf{\textit{f}}\in\mathcal{F}(\textbf{M})$. Suppose that $T\textbf{M}$ has a splitting     $E^{s}\oplus E^{u}$ which is  $D\textbf{\textit{f}}$-invariant and satisfies the  property ii. from   Definition \ref{anosovfamily}. Thus,  $E_{p}^{s} $ and $E_{p}^{u} $ depend continuously on $p$. 
\end{propo}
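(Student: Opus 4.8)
The plan is to exploit the intrinsic description of the subbundles furnished by Lemma \ref{unicidadesubs} together with the continuity of the maps $x\mapsto D_{x}\textbf{\textit{f}}^{\,n}$. Since $\textbf{M}$ is a disjoint union and distinct components are at distance $1$, continuity is a purely local statement inside each $M_{i}$. So I fix $i$, work in a small neighborhood $U\subseteq M_{i}$ of a point $p$, and trivialize $TM_{i}|_{U}$ so that all tangent spaces $T_{q}M_{i}$ with $q\in U$ are identified with a fixed $\mathbb{R}^{d}$, where $d=\dim M_{i}$. Under this identification the subspaces $E_{q}^{s}$ and $E_{q}^{u}$ become elements of the compact space $\mathcal{G}=\coprod_{m=0}^{d}G(m,d)$ of linear subspaces of $\mathbb{R}^{d}$, and the goal is to show that $q\mapsto E_{q}^{s}$ and $q\mapsto E_{q}^{u}$ are continuous into $\mathcal{G}$. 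I will treat $E^{s}$; the argument for $E^{u}$ is identical after replacing $\textbf{\textit{f}}$ by its inverse and invoking part ii. of Lemma \ref{unicidadesubs}.

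The key step is the following semicontinuity statement: if $p_{k}\to p$ and unit vectors $v_{k}\in E_{p_{k}}^{s}$ satisfy $v_{k}\to v$, then $v\in E_{p}^{s}$. Indeed, fix $n\geq 1$. Because $v_{k}\in E_{p_{k}}^{s}$ and $\Vert v_{k}\Vert=1$, property ii. of Definition \ref{anosovfamily} gives $\Vert D_{p_{k}}(\textbf{\textit{f}}^{\,n})(v_{k})\Vert\leq c\lambda^{n}$. Each $f_{j}$ is a $C^{1}$-diffeomorphism, so $x\mapsto D_{x}\textbf{\textit{f}}^{\,n}$ is continuous; letting $k\to\infty$ yields $\Vert D_{p}(\textbf{\textit{f}}^{\,n})(v)\Vert\leq c\lambda^{n}$. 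As this holds for every $n$, the sequence $\Vert D_{p}(\textbf{\textit{f}}^{\,n})(v)\Vert$ is bounded, whence $v\in E_{p}^{s}$ by the characterization of Lemma \ref{unicidadesubs}, whose proof uses only property ii. and the $D\textbf{\textit{f}}$-invariance of the splitting. The symmetric statement holds for $E^{u}$.

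With this in hand I conclude by a subsequence-and-dimension-count argument. Given $p_{k}\to p$, pass to a subsequence along which $\dim E_{p_{k}}^{s}\equiv m$ is constant (so $\dim E_{p_{k}}^{u}\equiv d-m$) and, by compactness of $\mathcal{G}$, along which $E_{p_{k}}^{s}\to F$ and $E_{p_{k}}^{u}\to G$ with $\dim F=m$ and $\dim G=d-m$. Each vector of a limiting orthonormal basis of $F$ is a limit of unit vectors drawn from the $E_{p_{k}}^{s}$, hence lies in $E_{p}^{s}$ by the semicontinuity statement; thus $F\subseteq E_{p}^{s}$, and likewise $G\subseteq E_{p}^{u}$. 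This gives $m\leq\dim E_{p}^{s}$ and $d-m\leq\dim E_{p}^{u}$. Since $\dim E_{p}^{s}+\dim E_{p}^{u}=d=m+(d-m)$, both inequalities are equalities, so $F=E_{p}^{s}$ and $G=E_{p}^{u}$. Therefore every subsequence of $(E_{p_{k}}^{s})$ admits a further subsequence converging to $E_{p}^{s}$, which forces $E_{p_{k}}^{s}\to E_{p}^{s}$; the same holds for $E^{u}$, establishing continuity.

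The main obstacle I anticipate is exactly the dimension bookkeeping in the last step: the semicontinuity statement alone only shows that limits of stable directions remain stable, which a priori would permit $\dim E_{p_{k}}^{s}$ to drop in the limit. The device that rules this out is running the argument simultaneously for $E^{s}$ and $E^{u}$ and using the complementarity $E_{p}^{s}\oplus E_{p}^{u}=T_{p}M_{i}$: the two one-sided inequalities on dimensions, forced to sum to $d$, pin the dimensions down and upgrade mere inclusion to equality. I will also note that the local trivialization distorts norms only by a factor bounded on the relatively compact $U$, which is harmless here since the proof uses only the \emph{boundedness} of $\Vert D_{p}(\textbf{\textit{f}}^{\,n})(v)\Vert$ and not its precise value.
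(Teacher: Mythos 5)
Your proof is correct and takes essentially the same route as the paper: both arguments hinge on the semicontinuity step (limits of unit stable/unstable vectors remain stable/unstable, obtained from Lemma \ref{unicidadesubs} together with the continuity of the Riemannian metric and of $x\mapsto D_{x}(\textbf{\textit{f}}^{\,n})$), combined with the complementarity $E_{p}^{s}\oplus E_{p}^{u}=T_{p}M_{i}$ to control dimensions. The paper merely packages the compactness through converging orthonormal bases instead of the Grassmannian, proving local constancy of $\dim E_{p}^{s}$ as a separate preliminary step, whereas you fold the dimension count and the convergence into a single subsequence argument; the underlying mathematics is identical.
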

 
\begin{proof} First we prove that the dimensions of   $E^{u}$ and  $E^{s}$ are  locally constants. Let $p\in \textbf{M}$ and    $k=\text{dim} E_{p} ^{s}$.  Suppose by contradiction that there exists a sequence $(p_{m})_{m\in\mathbb{N}}\subseteq \textbf{M}$ converging to $p$ such that $\text{dim} E_{p_{m}} ^{s}\geq k+1$ for all $m$. Take a sequence  of orthonormal  vectors   \[v_{1}(p_{m}),...,v_{k}(p_{m}) , v_{k+1}(p_{m})\quad\text{ in }E_{p_{m}} ^{s},\text{ for each }m.\] Choosing a suitable subsequence, we can suppose that \[v_{1}(p_{m})\rightarrow v_{1}\in T_{p}\textbf{M},...,v_{k+1}(p_{m})\rightarrow v_{k+1}\in T_{p}\textbf{M}\text{ \quad as }m\rightarrow \infty.\]  Therefore, by continuity of the Riemannian metric, it follows from condition ii. in   Definition \ref{anosovfamily} that, for all $n\geq 1,$ we have
   \begin{equation}\label{dddddd}\Vert D_{p}(\textbf{\textit{f}}_{i}^{n}) (v_{s})\Vert \leq c\lambda^{n}\Vert v_{s}\Vert\quad\text{for each }s=1,\dots,k+1.
   \end{equation}
By Lemma \ref{unicidadesubs} we obtain    $v_ {1},... , v_{k+1}\in E_{p}^{s}.$
Since    $v_{1}(p_{m}),...,v_{k}(p_{m}) ,$ and $ v_{k+1}(p_{m})$ are orthonormal for all $m\geq 1$, we have that  $v_ {1},... , v_{k+1}$ are  orthonormal, which contradicts that $\text{dim} E_{p} ^{s}=k$. Similarly we can prove that  there is not       any sequence   $(p_{m})_{m\in\mathbb{N}}$ converging to $p$ with $\text{dim} E_{p_{m}} ^{s}<k$ for all $m$. Therefore, the dimension of  $E_{p} ^{s}$ is  locally  constant.

Analogously we obtain that the dimension of  $E_{p} ^{u}$ is locally constant.

\medskip

Now, let $(p_{m})_{m\in\mathbb{N}}$ be a sequence in $\textbf{M}$ such that $p_{m}\rightarrow p\in \textbf{M}$ as $m\rightarrow \infty.$ Without loss of generality, we can suppose that $(p_{m})_{m\in\mathbb{N}}\subseteq M_{i}$ and $p\in M_{i}$ for some $i\in\mathbb{Z}$. This fact  follows from the definition of the metric on \textbf{M} given  in  \eqref{metricatotal}. Furthermore, we can assume that $\text{dim}  E_{p_{m}}^{s} = \text{dim}  E_{p}^{s} =k$ for every $m\geq 1$. Let     $\{v_{1}(p_{m}),...,v_{k}(p_{m})  \}$ be an  orthonormal basis of $E_{p_{m}}^{s} $, for each $m\geq 1$, such that $v_{1}(p_{m})\rightarrow v_{1}\in T_{p}M_{i},...,v_{k}(p_{m})\rightarrow v_{k}\in T_{p}M_{i} $ as $m\rightarrow \infty$. By the continuity of the Riemannian metric we have that  $ v_{1} ,...,v_{k} $ are orthonormal and   \begin{equation*}\Vert D_{p}(\textbf{\textit{f}}_{i}^{n}) (v_{s})\Vert \leq c\lambda^{n}\Vert v_{s}\Vert\quad\text{for each }s=1,\dots,k, 
   \end{equation*} 
 that is, $v_{1} ,...,v_{k} $ belong to $E_{p}^{s} $. This fact proves that $E_{p}^{s}$ depends continuously on $p$. Analogously we can prove that  $E_{p}^{u}$ depends continuously on $p$. 
 \end{proof}

The notion   of Anosov diffeomorphism  does not  depend of the   Riemannian  metric on the manifold (see \cite{Shub}). In contrast, the notion of Anosov family depends on the Riemannian metric taken on each $M_{i}$ (see \cite{alb}, Example  4).    However, the next proposition   proves that  the notion of    Anosov family does not depend on the Riemannian metric chosen  uniformly equivalent on \textbf{M}.\footnote{Two Riemannian metrics $\langle\cdot,\cdot\rangle$ and  $\langle\cdot,\cdot\rangle_{\ast}$ defined on a manifold $M$ are \textit{uniformly equivalent} if there exist positive numbers $k$ and $K$ such that $k\langle v,v\rangle\leq \langle v,v\rangle_{\ast}\leq K\langle v,v\rangle$ for any $v\in TM$.}

\begin{propo}\label{normasequiv} Let  $\langle\cdot,\cdot\rangle$ and $\langle\cdot,\cdot\rangle^{\star}$   be  Riemannian metrics   uniformly equivalent  on $\textbf{M}$.  We have that     $(\textbf{M},\langle\cdot,\cdot\rangle ,\textbf{\textit{f}})$  is an   Anosov family if, and only if,   $(\textbf{M},\langle\cdot,\cdot\rangle^{\star},\textbf{\textit{f}})$  is an Anosov family.\end{propo}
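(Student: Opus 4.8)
The plan is to show directly that conditions i. and ii. of Definition \ref{anosovfamily} are preserved when we pass from $\langle\cdot,\cdot\rangle$ to a uniformly equivalent metric $\langle\cdot,\cdot\rangle^{\star}$. Since uniform equivalence is a symmetric relation, it suffices to prove one implication, say that $(\textbf{M},\langle\cdot,\cdot\rangle,\textbf{\textit{f}})$ Anosov implies $(\textbf{M},\langle\cdot,\cdot\rangle^{\star},\textbf{\textit{f}})$ Anosov; the converse follows by interchanging the roles of the two metrics. Fix positive constants $k,K$ with $k\langle v,v\rangle\leq\langle v,v\rangle^{\star}\leq K\langle v,v\rangle$ for all $v\in T\textbf{M}$; equivalently $\sqrt{k}\,\Vert v\Vert\leq\Vert v\Vert^{\star}\leq\sqrt{K}\,\Vert v\Vert$, where $\Vert\cdot\Vert^{\star}$ is the norm induced by $\langle\cdot,\cdot\rangle^{\star}$.

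The first observation is that condition i. is purely about the splitting $T_{p}\textbf{M}=E_{p}^{s}\oplus E_{p}^{u}$ and its $D\textbf{\textit{f}}$-invariance. Neither the existence of this direct-sum decomposition nor the invariance relations $D_{p}\textbf{\textit{f}}(E_{p}^{s})=E_{\textbf{\textit{f}}(p)}^{s}$ and $D_{p}\textbf{\textit{f}}(E_{p}^{u})=E_{\textbf{\textit{f}}(p)}^{u}$ makes any reference to the metric, so the same splitting serves for $\langle\cdot,\cdot\rangle^{\star}$. The only subtlety is continuity of $p\mapsto E_{p}^{s},E_{p}^{u}$: here I would invoke Proposition \ref{continuidade}, which recovers continuity of the splitting from condition ii. together with $D\textbf{\textit{f}}$-invariance, once ii. has been verified in the starred metric. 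So condition i. reduces to verifying ii.

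For condition ii., the estimate is a two-line consequence of the norm comparison. Take $v\in E_{p}^{s}$. Then for every $n\geq 1$,
\begin{equation*}
\Vert D_{p}(\textbf{\textit{f}}_{i}^{n})(v)\Vert^{\star}\leq\sqrt{K}\,\Vert D_{p}(\textbf{\textit{f}}_{i}^{n})(v)\Vert\leq\sqrt{K}\,c\lambda^{n}\Vert v\Vert\leq\sqrt{\tfrac{K}{k}}\,c\lambda^{n}\Vert v\Vert^{\star},
\end{equation*}
so ii. holds in the starred metric with the same $\lambda\in(0,1)$ and the new constant $c^{\star}=\sqrt{K/k}\,c>0$. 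The symmetric computation on $E_{p}^{u}$, applied to $D_{p}(\textbf{\textit{f}}_{i}^{-n})$, gives the unstable estimate with the same $c^{\star}$ and $\lambda$. This completes the verification of ii., and with it condition i. via Proposition \ref{continuidade}, establishing that $(\textbf{M},\langle\cdot,\cdot\rangle^{\star},\textbf{\textit{f}})$ is Anosov.

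I do not expect a genuine obstacle here; the proposition is essentially a robustness statement, and the crucial point is simply that uniform equivalence supplies \emph{global} constants $k,K$ (independent of $i$ and of $p$), so that the single hyperbolicity rate $\lambda$ is untouched and only the multiplicative constant $c$ is rescaled. The one place to be careful is that the two-sided bound $\sqrt{k}\,\Vert v\Vert\leq\Vert v\Vert^{\star}\leq\sqrt{K}\,\Vert v\Vert$ is used in \emph{both} directions in the chain above (the upper bound on $\Vert\cdot\Vert^{\star}$ of the image and the lower bound on $\Vert\cdot\Vert^{\star}$ of $v$), which is exactly why plain equivalence on each fiber would not suffice and uniform equivalence across all of $\textbf{M}$ is required.
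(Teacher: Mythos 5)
Your proof is correct and follows essentially the same route as the paper: both arguments keep the same splitting and the same rate $\lambda$, and use the global constants from uniform equivalence on both sides of the estimate to rescale only the multiplicative constant $c$ (the paper gets $\tilde{c}=Kc/k$, you get $\sqrt{K/k}\,c$ from comparing metrics rather than norms, an immaterial difference). Your extra remark on recovering continuity of the splitting via Proposition \ref{continuidade} is a point the paper leaves implicit, but it does not change the argument.
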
 
\begin{proof}  Let $\Vert \cdot\Vert$ and $\Vert \cdot\Vert^{\star}$ be the norms induced by $\langle\cdot,\cdot\rangle$ and $\langle\cdot,\cdot\rangle^{\star}$, respectively. Since $\langle\cdot,\cdot\rangle$ and $\langle\cdot,\cdot\rangle^{\star}$ are     uniformly equivalent  on $\textbf{M}$, there exist $k>0$ and $K>0$  such that 
\(k\Vert v\Vert ^{\star}\leq \Vert v\Vert \leq K\Vert v\Vert ^{\star} \) for all \(v\in T\textbf{M}  .\) 
Suppose that  $(\textbf{M},\langle\cdot,\cdot\rangle ,\textbf{\textit{f}})$  is an   Anosov family with constant $\lambda\in(0,1)$ and $c\geq1$. Thus,     for  $v\in T_{p}\textbf{M}, n\geq1$, 
\[\Vert D_{p}(\textbf{\textit{f}}_{i} ^{n})(v)\Vert ^{\star}\leq  (1/k)\Vert D_{p}(\textbf{\textit{f}}_{i} ^{n})(v)\Vert \leq (c/k)\lambda^{n}\Vert v\Vert\leq (Kc/k)\lambda^{n}\Vert v\Vert^{\star} .\]
Analogously we have  \(\Vert D_{p}(\textbf{\textit{f}}_{i} ^{-n})(v)\Vert ^{\star}\leq    (Kc/k)\lambda^{n}\Vert v\Vert^{\star},\)   for \(v\in T_{p}\textbf{M}, n\geq1.\)
Therefore,  $(\textbf{M},\langle\cdot,\cdot\rangle^{\star},\textbf{\textit{f}})$  is an Anosov family with constant $\lambda$ and $\tilde{c}=Kc/k.$

Similarly   we  can prove   if $(\textbf{M},\langle\cdot,\cdot\rangle^{\star} ,\textbf{\textit{f}})$  is an   Anosov family then  $(\textbf{M},\langle\cdot,\cdot\rangle,\textbf{\textit{f}})$  is an Anosov family.
\end{proof}

  In    Proposition  \ref{mather} we will  show  there exists a Riemannian metric   $\langle\cdot,\cdot\rangle^{\star}$,   equivalent to  $\langle\cdot,\cdot\rangle$ on each  $M_{i}$ ($\langle\cdot,\cdot\rangle^{\star}$   is not necessarily uniformly equivalent to $\langle\cdot,\cdot\rangle$  on the total space \textbf{M}), with which, $(\textbf{M},\langle\cdot,\cdot\rangle^{\star},\textbf{\textit{f}})$ is a  strictly Anosov family. That is a version for families of a well-known Lemma of Mather for Anosov diffeomorphisms  (see \cite{Shub}). In order to prove this fact,  we introduce the following notion: 
Fix  $i\in \mathbb{Z}$. Since for each $p\in M_{i}$, the subspaces $E^{s}_{p}$ and $E^{u}_{p}$ are  transversal, that is, $E^{s}_{p}\oplus E^{u}_{p}=T_{p} M_{i}$,  then, by the compactness of  $M_{i}$ and the continuity of both the Riemannian metric and the subspaces  $E^{s}_{p}$ and  $E^{u}_{p}$, we obtain that there exists  $\mu_{i}\in (0,1)$  such that, if $v_{s}$ and $v_{u}$ are unit vectors in $ E_{p}^{s}$ and $ E_{p}^{u}$, respectively, then \begin{equation}\label{propang} \text{cos}(\widehat{v_{s}v_{u}})\in [\mu_{i} -1,1-\mu_{i}],
\end{equation}
where $\widehat{v_{s}v_{u}}$ is the  angle between $v_{s}$ and $v_{u}.$     In the case of Anosov diffeomorphisms defined on compact manifolds  those angles  are uniformly bounded away from 0. In    \cite{Jeo2} we  gave  an example where the  angles between the unstable and stable subspaces along the orbit of a point of $M_{0}$ converge to zero. 

 \begin{defin}\label{propang2} We say that an Anosov family satisfies the \textit{property of the angles} (or \textit{s. p. a.}) if there exists $ \mu\in(0,1)$  such that, for all $p\in\mathbf{M}$, if $v_{s}\in E_{p}^{s}$ and  $v_{u}\in E_{p}^{u}$,  then
$\cos(\widehat{v_{s}v_{u}})\in [\mu -1,1-\mu],$ 
 that is, $\mu$ does not depend on $i$. \end{defin}

\begin{propo}\label{mather} There exists a  $C^{\infty}$ Riemannian metric $\langle\cdot,\cdot\rangle^{\star}$ on \textbf{M}, which is
   uniformly    equivalent to  $\langle\cdot,\cdot\rangle $ on each $M_{i}$,  such that  $(\textbf{M},\langle\cdot,\cdot\rangle^{\star},\textbf{\textit{f}}\,)$ is a strictly  Anosov family. Furthermore,  $(\textbf{M},\langle\cdot,\cdot\rangle^{\star},\textbf{\textit{f}}\,)$  satisfies the property of the angles.
\end{propo}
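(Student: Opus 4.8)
The plan is to adapt Mather's construction of an adapted (Lyapunov) metric to the non-stationary setting, building it component by component and then smoothing. Fix $\lambda'\in(\lambda,1)$ and choose an integer $N\geq 1$ large enough that $c\lambda^{N}\leq(\lambda')^{N}$; this can be done with a \emph{single} $N$ for all $i$ because the hyperbolicity constants $c,\lambda$ of Definition \ref{anosovfamily} are uniform in $i$. For $p\in M_{i}$ I would first define a \emph{continuous} inner product $\langle\cdot,\cdot\rangle^{\sharp}_{p}$ on $T_{p}M_{i}$ by declaring $E^{s}_{p}$ and $E^{u}_{p}$ to be orthogonal and setting, for $v_{s}\in E^{s}_{p}$ and $v_{u}\in E^{u}_{p}$,
\[
\|v_{s}\|^{\sharp\,2}_{p}=\sum_{j=0}^{N-1}(\lambda')^{-2j}\,\|D_{p}(\textbf{\textit{f}}_{i}^{\,j})(v_{s})\|^{2},\qquad \|v_{u}\|^{\sharp\,2}_{p}=\sum_{j=0}^{N-1}(\lambda')^{-2j}\,\|D_{p}(\textbf{\textit{f}}_{i}^{\,-j})(v_{u})\|^{2}.
\]
Using $D_{f_{i}(p)}(\textbf{\textit{f}}_{i+1}^{\,j})\circ D_{p}f_{i}=D_{p}(\textbf{\textit{f}}_{i}^{\,j+1})$, the sum defining $\|D_{p}f_{i}(v_{s})\|^{\sharp}$ telescopes against the one defining $\|v_{s}\|^{\sharp}$, and the single surviving term gives the inequality $\|D_{p}(\textbf{\textit{f}}_{i}^{\,N})(v_{s})\|\leq(\lambda')^{N}\|v_{s}\|$, which holds by the choice of $N$ and condition ii of Definition \ref{anosovfamily}. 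Hence the one-step bound $\|D_{p}f_{i}(v_{s})\|^{\sharp}_{f_{i}(p)}\leq\lambda'\|v_{s}\|^{\sharp}_{p}$ holds at every $p$ with a uniform $\lambda'$, and the symmetric computation shows that $\textbf{\textit{f}}^{-1}$ contracts the unstable bundle by the same factor $\lambda'$.

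Iterating these one-step estimates along the orbit, using the $D\textbf{\textit{f}}$-invariance of the splitting so that $D_{p}(\textbf{\textit{f}}_{i}^{\,k})(v_{s})$ stays in the stable bundle, yields $\|D_{p}(\textbf{\textit{f}}_{i}^{\,n})(v_{s})\|^{\sharp}\leq(\lambda')^{n}\|v_{s}\|^{\sharp}$ and the analogous unstable estimate, so $\textbf{\textit{f}}$ is \emph{strictly} Anosov (i.e.\ $c=1$) for $\langle\cdot,\cdot\rangle^{\sharp}$. For uniform equivalence on a fixed $M_{i}$, the $j=0$ term gives $\|v\|\leq\|v\|^{\sharp}$ on each of $E^{s}_{p},E^{u}_{p}$, while condition ii bounds the remaining terms by a geometric sum, giving $\|v\|^{\sharp}\leq c\sqrt{N}\,\|v\|$ there; combining these with the lower angle bound $\mu_{i}$ of \eqref{propang}, available by compactness of $M_{i}$, to control the cross term $2\langle v_{s},v_{u}\rangle$ shows that $\langle\cdot,\cdot\rangle^{\sharp}$ and $\langle\cdot,\cdot\rangle$ are uniformly equivalent on $M_{i}$, with constants that may depend on $i$ through $\mu_{i}$ — which is exactly why the statement only claims equivalence on each $M_{i}$, not on all of $\textbf{M}$. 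The property of the angles is immediate for $\langle\cdot,\cdot\rangle^{\sharp}$, since $E^{s}_{p}\perp E^{u}_{p}$ forces the relevant cosine to be $0$, so any $\mu\in(0,1)$ works in Definition \ref{propang2}.

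It remains to replace the merely continuous $\langle\cdot,\cdot\rangle^{\sharp}$ (continuous because the splitting is only continuous, by Proposition \ref{continuidade}) by a genuinely $C^{\infty}$ metric without destroying the two uniform features. The main point — and the step I expect to be the real obstacle — is that the smoothing must be carried out with a \emph{single} relative accuracy across the infinitely many components, since the Anosov constants are uniform but the geometry of the individual $M_{i}$ is not. I would fix $\epsilon>0$ with $(1+\epsilon)\lambda'<1$ and, on each compact $M_{i}$ separately, approximate $\langle\cdot,\cdot\rangle^{\sharp}$ by a $C^{\infty}$ metric $\langle\cdot,\cdot\rangle^{\star}$ that is \emph{multiplicatively} $(1+\epsilon)$-close to it, i.e.\ $(1+\epsilon)^{-1}\langle v,v\rangle^{\sharp}\leq\langle v,v\rangle^{\star}\leq(1+\epsilon)\langle v,v\rangle^{\sharp}$; this is achievable by mollifying the components of $\langle\cdot,\cdot\rangle^{\sharp}$ in finitely many charts against a partition of unity, the additive tolerance being $i$-dependent but the resulting relative error $\epsilon$ uniform.

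A multiplicative $(1+\epsilon)$-perturbation of the metric changes each one-step contraction factor by at most $(1+\epsilon)$, so the one-step (hence, after iteration, the $n$-step) contraction persists with the uniform constant $(1+\epsilon)\lambda'<1$, giving strict hyperbolicity for $\langle\cdot,\cdot\rangle^{\star}$; likewise it moves the angle between $E^{s}_{p}$ and $E^{u}_{p}$ by an amount controlled by $\epsilon$ alone, so the cosine stays within $[\mu-1,1-\mu]$ for a single $\mu\in(0,1)$ and the property of the angles survives. Finally $\langle\cdot,\cdot\rangle^{\star}$ is uniformly equivalent to $\langle\cdot,\cdot\rangle^{\sharp}$ on each $M_{i}$ by construction and $\langle\cdot,\cdot\rangle^{\sharp}$ is uniformly equivalent to $\langle\cdot,\cdot\rangle$ on each $M_{i}$ by the second paragraph, whence $\langle\cdot,\cdot\rangle^{\star}$ is uniformly equivalent to $\langle\cdot,\cdot\rangle$ on each component; and because the $M_{i}$ are open and closed in $\textbf{M}$, the pieces assemble into a single $C^{\infty}$ metric on $\textbf{M}$ enjoying all the required properties.
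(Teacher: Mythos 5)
Your proof is correct, and at its core it follows the same strategy as the paper: an adapted (Lyapunov) metric in which $E^{s}_{p}$ and $E^{u}_{p}$ are declared orthogonal, uniform equivalence on each $M_{i}$ extracted from the per-component angle bound $\mu_{i}$ of \eqref{propang}, and a final per-component smoothing. Two of your technical choices diverge from the paper's, though, in instructive ways. Where you take a finite sum of squares with $N$ terms chosen so that $c\lambda^{N}\leq(\lambda')^{N}$ and run a telescoping one-step estimate, the paper takes the infinite series $\Vert v_{s}\Vert_{1}=\sum_{n\geq 0}(\lambda+\varepsilon)^{-n}\Vert D_{p}(\textbf{\textit{f}}^{\,n})v_{s}\Vert$ with $\varepsilon\in(0,1-\lambda)$; both give one-step contraction with a uniform factor ($\lambda'$ versus $\lambda+\varepsilon$), but the paper's version produces the explicit constants of \eqref{qwerty} and \eqref{norma231}, namely $\frac{\lambda+\varepsilon}{\varepsilon}c$ and $\Delta_{i}=\frac{1}{\mu_{i}}(\frac{\lambda+\varepsilon}{\varepsilon}c)^{2}$, which are reused verbatim later in Corollary \ref{mather2} and in the proof of Theorem \ref{teoremaprin}; your constants ($c\sqrt{N}$ and the $\mu_{i}$-dependent bound) would serve the same purpose after renaming, but would not plug into those later arguments without adjustment. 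More significantly, your treatment of the smoothing step is actually more careful than the paper's: the paper requests $|\langle v,v\rangle^{\star}_{i}-\langle v,v\rangle_{1}|<\varepsilon$ for each $v\in TM_{i}$, which by homogeneity of quadratic forms cannot hold for all $v$ and must be read as a bound on unit vectors (i.e.\ a relative bound), and it does not verify that strictness of the hyperbolicity constant or the angle property survive the perturbation; your insistence on a multiplicative $(1+\epsilon)$-approximation with $(1+\epsilon)\lambda'<1$, together with the observation (provable by polarization) that the $\sharp$-orthogonal splitting remains uniformly transverse for $\langle\cdot,\cdot\rangle^{\star}$ with a $\mu$ depending only on $\epsilon$, supplies exactly the missing checks. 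You also correctly isolate why the proposition claims equivalence only on each $M_{i}$ and not on $\textbf{M}$: the equivalence constant degenerates with $\mu_{i}$, which is the point the paper discusses immediately after the proof.
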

\begin{proof} Let $\varepsilon\in (0,1-\lambda)$.   For  $p\in \textbf{M}$, if $(v_{s},v_{u})\in E_{p}^{s}\oplus E_{p}^{u},$ take  \begin{equation}\label{equivnorm}\Vert (v_{s},v_{u})\Vert_{1}=\sqrt{{\Vert v_{s}\Vert_{1}} ^{2}+{\Vert v_{u}\Vert_{1}}^{2}}, 
\end{equation} where
$ \Vert v_{s}\Vert_{1} =\sum_{n=0} ^{\infty} (\lambda+\varepsilon)^{-n}\Vert D_{p}(\textbf{\textit{f}} ^{\,n}) v_{s}\Vert $ and $\Vert v_{u}\Vert_{1} =\sum_{n=0} ^{\infty} (\lambda+\varepsilon)^{-n}\Vert D_{p}(\textbf{\textit{f}}^{\, -n}) v_{u}\Vert.$
Note that if $v_{s}\in E_{p}^{s}$ we have
\begin{equation}\label{qwerty} 
\Vert v_{s}\Vert_{1} =\sum_{n=0} ^{\infty} (\lambda+\varepsilon)^{-n}\Vert D_{p}(\textbf{\textit{f}}^{n}) v_{s}\Vert\leq \sum_{n=0} ^{\infty} (\lambda+\varepsilon)^{-n}c\lambda^{n}\Vert  v_{s}\Vert  =\frac{\lambda+\varepsilon}{\varepsilon}c\Vert v_{s}\Vert.
\end{equation}
 Analogously, $\Vert v_{u}\Vert_{1}\leq \frac{\lambda+\varepsilon}{\varepsilon}c\Vert v_{u}\Vert$ for  $v_{u}\in E_{p}^{u}.$ Consequently the series  $\Vert v_{s}\Vert_{1}$ and  $\Vert v_{u}\Vert_{1}$  converge uniformly. That is, $\Vert \cdot\Vert_{1}$ is well defined.

  We prove  that  $\Vert \cdot\Vert_{1}$ is  uniformly equivalent to  $\Vert \cdot\Vert$ on each  $M_{i}$.      It is clear that $\Vert v_{s}\Vert \leq \Vert v_{s}\Vert_{1}$ and $\Vert v_{u}\Vert \leq \Vert v_{u}\Vert_{1}.$ Thus, 
\begin{align*}\Vert(v_{s},v_{u})\Vert  &\leq \Vert v_{s}\Vert+\Vert v_{u}\Vert \leq 2(\Vert v_{s}\Vert^{2}+\Vert v_{u}\Vert^{2})^{1/2}\leq 2(\Vert v_{s}\Vert^{2}_{1}+\Vert v_{u}\Vert^{2}_{1})^{1/2}  =2\Vert (v_{s},v_{u})\Vert _{1}.
\end{align*}
This fact implies  \begin{equation}\label{norma2312}\Vert v\Vert \leq 2\Vert v\Vert _{1}\quad\text{ for all }v\in T\textbf{M}.
\end{equation} 

 Fix $p\in M_{i}$.   Let  $\theta_{p}$ be the angle between two vectors     $v_{s}\in E_{p} ^{s}$ and $v_{u}\in E_{p} ^{u}$, for  $p\in M_{i}$. Take $\mu_{i}$ as in  \eqref{propang}.   Since $(1-\mu_{i})(\Vert v_{s}\Vert^{2}+\Vert v_{u}\Vert ^{2})\geq 2(1-\mu_{i})\Vert v_{s}\Vert \Vert v_{u}\Vert$, we have $$\Vert v_{s}\Vert^{2}+\Vert v_{u}\Vert ^{2}+2(\mu_{i}-1)\Vert v_{s}\Vert \Vert v_{u}\Vert\geq \mu_{i}(\Vert v_{s}\Vert^{2}+\Vert v_{u}\Vert ^{2}).$$
 Therefore 
\begin{align*}\Vert (v_{s},v_{u})\Vert ^{2}
&=\Vert v_{s}\Vert^{2}+ \Vert v_{u}\Vert ^{2}-2\text{cos} \theta_{p}\Vert v_{s}\Vert\Vert v_{u}\Vert \geq \Vert v_{s}\Vert^{2}+ \Vert v_{u}\Vert ^{2}+2(\mu_{i}-1)\Vert v_{s}\Vert\Vert v_{u}\Vert \\
&
\geq \mu _{i}(\Vert v_{s}\Vert^{2}+\Vert v_{u}\Vert ^{2}).
\end{align*} 
Consequently, \begin{align*}
\Vert (v_{s},v_{u})\Vert_{1}^{2}  &=
\Vert v_{s}\Vert_{1} ^{2}+\Vert v_{u}\Vert_{1} ^{2}\leq (\frac{\lambda+\varepsilon}{\varepsilon}c)^{2}(\Vert v_{s}\Vert^{2}+\Vert v_{u}\Vert^{2}) \leq \frac{1}{\mu_{i}}(\frac{\lambda+\varepsilon}{\varepsilon}c)^{2}\Vert (v_{s},v_{u})\Vert ^{2}.
\end{align*} 
Thus, 
   \begin{equation}\label{norma12}\Vert v\Vert_{1}\leq \frac{1}{\mu_{i}}(\frac{\lambda+\varepsilon}{\varepsilon}c)^{2}\Vert v\Vert\quad\text{ for all }v\in TM_{i}.
   \end{equation}

It follows from \eqref{norma2312} and  \eqref{norma12} that   \begin{equation}\label{norma231}\frac{1}{2}\Vert v\Vert \leq \Vert v\Vert _{1}\leq  \frac{1}{\mu_{i}}(\frac{\lambda+\varepsilon}{\varepsilon}c)^{2}\Vert v\Vert\quad\text{ for all }v\in TM_{i}.
\end{equation}  
Hence, the norm $\Vert \cdot\Vert_{1}$ is  uniformly equivalent to the norm $\Vert \cdot\Vert$ on each  $M_{i}$. 
 
We have also   that  \begin{equation*}\Vert D_{p}\textbf{\textit{f}}v_{s}\Vert_{1}  
  \leq (\lambda+\varepsilon)\Vert v_{s}\Vert_{1}\text{ if }v_{s}\in E_{p} ^{s} \text{ \, and \, }\Vert D_{p}(\textbf{\textit{f}} ^{\, -1})v_{u}\Vert_{1}  
 \leq (\lambda+\varepsilon)\Vert v_{u}\Vert_{1}\text{ if }v_{u}\in E_{p} ^{u}.
 \end{equation*}   
  Note that the norm $\Vert\cdot\Vert_{1}$ comes from an inner product $\langle\cdot,\cdot\rangle_{1} $,  which defines a continuous Riemannian metric on \textbf{M}. Consequently, for each $i$, we can choose a $C^{\infty}$-Riemannian metric $\langle\cdot,\cdot\rangle^{\star}_{i}$  such that $|\langle v,v\rangle^{\star}_{i}-\langle v,v\rangle_{1}|<\varepsilon$ for each $v\in TM_{i}$. We take $\langle \cdot,\cdot\rangle^{\star}$ on $\textbf{M}$,  defined on each $M_{i}$ as $\langle \cdot,\cdot\rangle^{\star}|_{M_{i}}=\langle \cdot,\cdot\rangle^{\star}_{i}.$
Hence $(\textbf{M},\langle\cdot,\cdot\rangle^{\star},\textbf{\textit{f}})$  is a strictly Anosov family with constant   $\lambda^{\prime}=\lambda+\varepsilon $, which s.   p.   a.. \end{proof}

By \eqref{norma231} we have that   $\langle\cdot,\cdot\rangle  $ and $\langle\cdot,\cdot\rangle^{\star}$ are uniformly equivalent on each $M_{i}$. However, this fact does not imply that they are uniformly equivalent on \textbf{M}, because $\mu_{i}$ could converge to 0 as $i\rightarrow \pm \infty$ (notice that \textbf{M} is not compact).   If the angles between the stable and unstable subspaces converge to zero along   an orbit, then $\mu_{i}$ converges to zero. In that case the two metrics are not uniformly equivalent on the total space. On the other hand:  

\begin{cor}\label{mather2} If   $(\textbf{M},\langle\cdot,\cdot\rangle,\textbf{\textit{f}})$  s.  p.   a., then there exists a  $C^{\infty}$-Riemannian metric  $\langle\cdot,\cdot\rangle^{\star}$,       uniformly  equivalent to $\langle\cdot,\cdot\rangle$ on \textbf{M},  such that $(\textbf{M},\langle\cdot,\cdot\rangle^{\star},\textbf{\textit{f}})$ is a strictly Anosov family that s.  p.   a..   
\end{cor}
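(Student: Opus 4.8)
The plan is to re-run the construction in the proof of Proposition \ref{mather} almost verbatim, using the same adapted norm $\Vert\cdot\Vert_{1}$ from \eqref{equivnorm}, and to observe that the s.p.a. hypothesis is precisely what upgrades the conclusion from ``uniformly equivalent on each $M_{i}$'' to ``uniformly equivalent on the whole total space $\textbf{M}$''. Concretely, I would fix $\varepsilon\in(0,1-\lambda)$ and form the same norm $\Vert\cdot\Vert_{1}$; the estimates \eqref{qwerty} and \eqref{norma2312}, together with the contraction/expansion bounds $\Vert D_{p}\textbf{\textit{f}}\,v_{s}\Vert_{1}\leq(\lambda+\varepsilon)\Vert v_{s}\Vert_{1}$ and $\Vert D_{p}(\textbf{\textit{f}}^{\,-1})v_{u}\Vert_{1}\leq(\lambda+\varepsilon)\Vert v_{u}\Vert_{1}$, are unchanged, since none of them invoked the angle bound \eqref{propang}.

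The only place where the index-dependent constant entered the proof of Proposition \ref{mather} is the lower estimate $\Vert(v_{s},v_{u})\Vert^{2}\geq\mu_{i}(\Vert v_{s}\Vert^{2}+\Vert v_{u}\Vert^{2})$, which used \eqref{propang} for the single component $M_{i}$. Under the s.p.a. hypothesis there is one constant $\mu\in(0,1)$ with $\cos(\widehat{v_{s}v_{u}})\in[\mu-1,1-\mu]$ for \emph{every} $p\in\textbf{M}$, so I would simply carry $\mu$ in place of each $\mu_{i}$. The chain of inequalities culminating in \eqref{norma231} then reads
\[
\frac{1}{2}\Vert v\Vert\leq\Vert v\Vert_{1}\leq\frac{1}{\mu}\Bigl(\frac{\lambda+\varepsilon}{\varepsilon}c\Bigr)^{2}\Vert v\Vert\qquad\text{for all }v\in T\textbf{M},
\]
with both constants now independent of $i$. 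This is exactly uniform equivalence of $\Vert\cdot\Vert_{1}$ and $\Vert\cdot\Vert$ on all of $\textbf{M}$, and it is the sole improvement over Proposition \ref{mather}.

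It then remains to pass to a $C^{\infty}$ metric without losing this global uniform equivalence. Here I would replace the additive smoothing control of Proposition \ref{mather} by a multiplicative one with an $i$-independent factor: on each compact $M_{i}$ choose a $C^{\infty}$ Riemannian metric $\langle\cdot,\cdot\rangle^{\star}_{i}$ with, say, $\frac{1}{2}\langle v,v\rangle_{1}\leq\langle v,v\rangle^{\star}_{i}\leq 2\langle v,v\rangle_{1}$ for all $v\in TM_{i}$, which is possible by the standard smoothing of a continuous metric on a compact manifold. Setting $\langle\cdot,\cdot\rangle^{\star}|_{M_{i}}=\langle\cdot,\cdot\rangle^{\star}_{i}$ produces a $C^{\infty}$ metric uniformly equivalent to $\langle\cdot,\cdot\rangle_{1}$, hence to $\langle\cdot,\cdot\rangle$, on the whole of $\textbf{M}$. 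The strict Anosov estimates (with constant $\lambda+\varepsilon$) and the property of the angles survive this uniformly small perturbation, the latter because $E^{s}$ and $E^{u}$ are exactly $\langle\cdot,\cdot\rangle_{1}$-orthogonal, so their angles remain uniformly bounded away from $0$ after a controlled smoothing.

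I expect the only genuine obstacle to lie in this last step, namely checking that the $C^{\infty}$ smoothing can be carried out with a uniform multiplicative error across the infinitely many, non-isometric components $M_{i}$, so that uniform equivalence is preserved on the non-compact total space; everything else is a direct re-run of Proposition \ref{mather} with $\mu$ substituted for $\mu_{i}$.
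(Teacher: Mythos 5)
Your proposal is correct and follows essentially the same route as the paper's own proof: the paper likewise substitutes the uniform $\mu$ of Definition \ref{propang2} for the component-wise $\mu_{i}$ in \eqref{norma231}, obtaining $\frac{1}{2}\Vert v\Vert \leq \Vert v\Vert_{1}\leq \frac{1}{\mu}\bigl(\frac{\lambda+\varepsilon}{\varepsilon}c\bigr)^{2}\Vert v\Vert$ for all $v\in T\textbf{M}$, concludes that $\Vert\cdot\Vert_{1}$ and $\Vert\cdot\Vert$ are uniformly equivalent on the total space, and then defers the smoothing and the strict Anosov estimates to the proof of Proposition \ref{mather}, exactly as you do. One small caveat in your final step: the multiplicative smoothing error must be taken uniformly close to $1$ rather than ``say, $2$'' (e.g.\ within a factor $1+\delta$ with $(1+\delta)(\lambda+\varepsilon)<1$), since a factor-$2$ perturbation of the quadratic form could push the contraction rate above $1$ and destroy strictness --- this matches your own remark that the perturbation must be uniformly small, and is the same point the paper handles with its $\varepsilon$-approximation of $\langle\cdot,\cdot\rangle_{1}$ on each compact $M_{i}$.
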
  
\begin{proof}  Since $\textbf{\textit{f}}$ satisfies the property of the angles, we can take a $\mu$ as in  Definition \ref{propang2}. From \eqref{norma231}   we have for all \(v\in T\textbf{M},\) \[\frac{1}{2}\Vert v\Vert \leq \Vert v\Vert _{1}\leq  \frac{1}{\mu}(\frac{\lambda+\varepsilon}{\varepsilon}c)^{2}\Vert v\Vert,\] where $\Vert \cdot\Vert_{1}$ is the metric defined in \eqref{equivnorm}.
Thus, $\Vert \cdot\Vert$ and $\Vert\cdot\Vert_{1}$ are uniformly equivalent on the total space. The  corollary follows from the proof  of   Proposition  \ref{mather}.\end{proof}

 A Riemannian metric is \textit{adapted to an hyperbolic set} of a diffeomorphism if, in this
metric, the expansion (contraction) of the unstable (stable) subspaces is seen after only one
iteration.   The metric obtained in    Proposition \ref{mather} is adapted to \textbf{M} for the family \textbf{\textit{f}}.  This metric is not always   uniformly equivalent to $\langle\cdot,\cdot\rangle$, because there exist Anosov families which do not s.  p.   a..

 \section{Invariant Cones}

    In order to prove the openness of $\mathcal{A}(\textbf{M})$, we use the method of the invariant cones (see \cite{luisb}).  We  will prove that  there exists a strong basic neighborhood $B(\textbf{\textit{f}},(\varepsilon_{i})_{i\in\mathbb{Z}})$ of \textbf{\textit{f}} such that   each family in $B(\textbf{\textit{f}},(\varepsilon_{i})_{i\in\mathbb{Z}})$ satisfies Lemma \ref{lemacones}.
    
    \medskip
      
 We will use the  exponential application  to work on a  Euclidian  ambient space.  For each $i\in \mathbb{Z}$, there exists $\delta_{i}>0$ such that, if  $p\in M_{i}$, then the exponencial application  at $p$, $\text{exp}_{p}:B_{p}(0,\delta_{i})\rightarrow B(p,\delta_{i})$,  is a diffeomorphism,    and $ \Vert v\Vert =d(\text{exp}_{p}(v), p),$ for all  $v\in B_{p}(0,\delta_{i}),$  where $B_{p}(0,\delta_{i})$ is the ball in $T_{p}M_{i}$ with radius   $\delta_{i}$ and center $0\in T_{p}M_{i}$   and $B(p,\delta_{i})$ is the ball in $M_{i}$  with  radius $\delta_{i}$ and center $p$, i.e.,  $\delta_{i}$ is the \textit{injectivity radius} of the exponential application at each $p\in M_{i}$.   The injectivity radius   could decrease as $ |i|$ increases, since the $M_{i}$'s are different.  We need a radius   small enough such that the inequality in \eqref{sigmaadecuado}   be valid. This inequality  depends also on the behavior of each $f_{i}$.
 
 \medskip
 
 By simplicity, in this section we will suppose that  $\textbf{\textit{f}}\in \mathcal{F}(\textbf{M})$ is an  Anosov family that satisfies the property of the angles.     
 
\begin{obs}We can choose    $\beta_{i}>0$, with $\beta_{i}<\min\{\delta_{i-1},\delta_{i},\delta_{i+1}\}/2$, such that, if $p\in M_{i}$,  
$f(B(p,2\beta_{i}))\subseteq B(f (p),\delta_{i+1}/2)$  and $ f^{-1}(B(f(p),2\beta_{i+1}))\subseteq B(p,\delta_{i}/2).$ 
Thus, if  $\textbf{\textit{g}}=(g_{i})_{i\in\mathbb{Z}}\in \mathcal{F}(\textbf{M})$ with $d_{\textbf{D}_{i}}(f_{i},g_{i})<\beta_{i}$ for all $i$,  we have
\begin{equation}\label{betan}g(B(p,\beta_{i}))\subseteq B(f (p),\delta_{i+1})\quad\text{and}\quad g^{-1}(B(f(p),\beta_{i+1}))\subseteq B(p,\delta_{i}).
\end{equation} 
\end{obs}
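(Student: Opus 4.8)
The plan is to split the remark into two tasks: first, producing for each $i$ a single radius $\beta_i>0$ that meets the explicit upper bound and the two containment properties written for $f$; second, deducing the perturbed containments \eqref{betan} for every $g$ lying $\beta_i$-close to $f$ in the $C^1$-metric.

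For the first task I would use that each $M_i$ is compact and each $f_i\colon M_i\to M_{i+1}$ is a $C^1$-diffeomorphism, so $f_i$ and $f_i^{-1}$ are uniformly continuous. Uniform continuity of $f_i$ gives a modulus $\eta_i>0$ with $d(x,p)<\eta_i\Rightarrow d(f_i(x),f_i(p))<\delta_{i+1}/2$ for every $p\in M_i$, and uniform continuity of $f_{i-1}^{-1}$ gives a modulus $\eta_{i-1}'>0$ with $d(y,q)<\eta_{i-1}'\Rightarrow d(f_{i-1}^{-1}(y),f_{i-1}^{-1}(q))<\delta_{i-1}/2$. I would then set $\beta_i=\tfrac12\min\{\eta_i,\eta_{i-1}',\delta_{i-1},\delta_i,\delta_{i+1}\}$, shrinking further if convenient. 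The only point to check here is that this is not a circular prescription: although the inverse property written at $p\in M_i$ involves $\beta_{i+1}$ and hence couples consecutive indices, each individual $\beta_i$ depends on finitely many quantities attached solely to $f_{i-1},f_i$ and to $\delta_{i-1},\delta_i,\delta_{i+1}$, so the entire sequence is well defined at once; note also that the $M_i$ need share neither a metric nor a common injectivity radius, so $\beta_i$ genuinely depends on $i$.

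For the second task the sole fact I need about $g$ is that the $C^1$-metric dominates the $C^0$-distance, so $d_{\textbf{D}_i}(f_i,g_i)<\beta_i$ yields $\sup_{w\in M_i}d(f_i(w),g_i(w))<\beta_i$. The forward containment is a two-term triangle inequality: for $x\in B(p,\beta_i)\subseteq B(p,2\beta_i)$ I write $d(g_i(x),f_i(p))\le d(g_i(x),f_i(x))+d(f_i(x),f_i(p))$, bound the first summand by $\beta_i$ and the second by $\delta_{i+1}/2$ through the chosen property of $f_i$, and conclude $g_i(x)\in B(f_i(p),\delta_{i+1})$ using $\beta_i<\delta_{i+1}/2$. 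The backward containment I would obtain by pushing a preimage forward: given $y\in B(f_i(p),\beta_{i+1})$, set $z=g_i^{-1}(y)$, so $g_i(z)=y$ and $d(f_i(z),f_i(p))\le d(f_i(z),g_i(z))+d(y,f_i(p))<\beta_i+\beta_{i+1}$; the chosen inverse property of $f_i$, applied to the ball of this radius about $f_i(p)$, then places $z=f_i^{-1}(f_i(z))$ in $B(p,\delta_i/2)\subseteq B(p,\delta_i)$.

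I expect this backward containment to be the main obstacle, for two linked reasons. It is exactly the step where the deliberate slack in the radii---the factor $2$ on the $\beta$'s and the factor $1/2$ on the $\delta$'s---must be spent to absorb the $C^0$-perturbation of size $<\beta_i$ created by replacing $f$ with $g$: one must ensure the perturbed point $f_i(z)$ still lands in the enlarged ball about $f_i(p)$ on which the property of $f_i^{-1}$ was arranged, so I would fix the inverse modulus to cover every radius up to $\beta_i+\beta_{i+1}\ (\le 2\max\{\beta_i,\beta_{i+1}\})$, which is admissible because both $\beta$'s are small and of which the displayed radius $2\beta_{i+1}$ is the relevant case. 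More conceptually, controlling a preimage under the varying map $g$ is precisely where $C^1$- rather than merely $C^0$-closeness matters: $C^0$-closeness of $g_i$ to $f_i$ need not force $g_i^{-1}$ to be close to $f_i^{-1}$, whereas $C^1$-closeness together with the compactness of $M_i$ (hence a uniform Lipschitz bound for $f_i^{-1}$) keeps the forward-pushing estimate inside the injectivity radius and so legitimate. The remaining bookkeeping is routine, uniform over $p\in M_i$ for each fixed $i$.
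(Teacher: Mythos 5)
Your proposal is correct and follows the same (routine) route the paper leaves implicit: the paper states this remark without proof, taking for granted exactly your two steps --- compactness and uniform continuity of $f_i$ and $f_{i}^{-1}$ to choose $\beta_i$ (noncircularly, since each $\beta_i$ depends only on finitely many neighboring data), then triangle inequalities against the $C^{0}$-distance dominated by $d_{\textbf{D}_{i}}$.

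Two small repairs, both at the backward containment you yourself flag as the delicate step. First, you cannot ``fix the inverse modulus to cover every radius up to $\beta_i+\beta_{i+1}$'': the modulus of $f_i^{-1}$ is determined by $f_i$, and what you are free to shrink are the $\beta$'s. With your displayed choice $\beta_i=\tfrac12\min\{\eta_i,\eta_{i-1}',\delta_{i-1},\delta_i,\delta_{i+1}\}$, nothing bounds $\beta_i$ by the modulus $\eta_i'$ of $f_i^{-1}$ (only $\beta_{i+1}$ is so bounded), so $\beta_i+\beta_{i+1}$ may exceed $\eta_i'$ and the enlarged inverse property you invoke is simply not available. The fix is the one you half-state: include $\eta_i'$ in the minimum defining $\beta_i$, i.e. take $\beta_i<\tfrac12\min\{\eta_i,\eta_{i-1}',\eta_i',\delta_{i-1},\delta_i,\delta_{i+1}\}$; then $\beta_i\leq\eta_i'/2$ and $\beta_{i+1}\leq\eta_i'/2$, hence $d(f_i(z),f_i(p))<\beta_i+\beta_{i+1}\leq\eta_i'$ places $z\in B(p,\delta_i/2)$, and no circularity appears since $\beta_i$ still depends only on $f_{i-1}$, $f_i$ and $\delta_{i-1},\delta_i,\delta_{i+1}$. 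Second, your closing diagnosis misattributes the role of the $C^{1}$-metric: the push-forward argument you run uses only the $C^{0}$-bound $\sup_{z}d(f_i(z),g_i(z))<\beta_i$ together with the bare fact that $g_i$ is a bijection (so $z=g_i^{-1}(y)$ exists); no Lipschitz bound on $f_i^{-1}$ and no closeness of $g_i^{-1}$ to $f_i^{-1}$ enters, precisely because you compare $f_i(z)$ with $g_i(z)$ rather than comparing inverses. The $C^{1}$-closeness is needed later in the paper, for the cone estimates of Lemmas \ref{limitadocones12} and \ref{limitadocontract12}, not for \eqref{betan}.
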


 Consider a linear isomorphism   $\tau_{p}:T_{p}\textbf{M}\rightarrow \mathbb{R}^{d}$, depending continuously on $p$,  which maps an orthonormal basis  of $E^{s}_{p}$ to an orthonormal basis of $\mathbb{R}^{k}$ 
 and maps an orthonormal basis  of $E^{u}_{p}$ to an orthonormal basis of $\mathbb{R}^{d-k}$, where $d$ is the dimension of each $M_{i}$ 
and $k$  the dimension of    $E^{s}_{p}$.  Since \textbf{\textit{f}} satisfies the property of the angles, the norm $\Vert \cdot\Vert_{1}$ 
defined in \eqref{equivnorm} is uniformly  equivalent to the norm    $\Vert \cdot \Vert$ (Corollary \ref{mather2}).
Hence, without loss of generality, we can suppose that $\Vert\cdot\Vert=\Vert\cdot\Vert_{1}$, because a  family of diffeomorphisms  in any strong basic neighborhood of \textbf{\textit{f}} is Anosov with $\Vert\cdot\Vert$ if and only if is Anosov with $\Vert\cdot\Vert_{1}$ (see Proposition \ref{normasequiv}).   Therefore, we can suppose that   \textbf{\textit{f}}  is   strictly Anosov.  Note that    $\Vert \tau_{p}(v)\Vert  =\Vert v\Vert$ for all $v\in T_{p}\textbf{M}.$  
  
  \medskip

For $g \in \textbf{D}_{i} $, with $d_{\textbf{D}_{i}}(f_{i},g )<\beta_{i}$, we set \begin{align*}\tilde{g}_{p}&=\tau_{f(p)}\circ\text{exp}_{f(p)}^{-1}\circ g_{i}\circ\text{exp}_{p}\circ\tau_{p} ^{-1}:B_{p}(0,\beta_{i})\rightarrow B_{f(p)}(0,\delta_{i+1})\\ 
\text{and }\quad\tilde{g}_{p}^{-1}&=\tau_{p}\circ\text{exp}_{p}^{-1}\circ g_{i} ^{-1}\circ\text{exp}_{f(p)}\circ\tau_{f(p)} ^{-1}:B_{f(p)}(0,\beta_{i+1})\rightarrow B_{p}(0,\delta_{i})
,\end{align*}
which are well-defined as a consequence of \eqref{betan}.

\begin{defin}
Let $B^{k}(0,\beta_{i})\subseteq \mathbb{R}^{k}$ and  $ B^{d-k}(0,\beta_{i} )\subseteq \mathbb{R}^{d-k}$ be the open balls with center at   $0$ and radius $\beta_{i}$. 
For $x\in \mathbb{R}^{d}$, we denote by $(x)_{1} $ and $(x)_{2}$ the orthogonal projections of $x$ on $E^{s}$ and $E^{u}$, respectively. If   $(v,w)\in B^{k}(0,\beta_{i} )\times B^{d-k}(0,\beta_{i} )$, then \begin{align*}\tilde{f}_{p}(v,w)&=((\tilde{f_{p}})_{1}(v,w),(\tilde{f_{p}})_{2}(v,w))=(\tilde{a}_{p}(v,w)+\tilde{F}_{p}(v) ,\tilde{b}_{p}(v,w) +\tilde{F}_{p}(w)),
\end{align*}
where $\tilde{a}_{p}(v,w)=(\tilde{f_{p}})_{1}(v,w)-\tilde{F}_{p}(v),$ $ 
\tilde{b}_{p}(v,w)=(\tilde{f_{p}})_{2}(v,w)-\tilde{F}_{p}(w), $ and $ 
\tilde{F}_{p} =D_{0}(\tilde{f}_{p})   .$
Analogously we have that, for each   $(v,w)\in B^{k}(0,\beta_{i+1} )\times B^{d-k}(0,\beta_{i+1} )$,   \begin{equation*}\tilde{f}_{p}^{-1}(v,w)=(\tilde{c}_{p}(v,w)+\tilde{G}_{p}(v) ,\tilde{d}_{p}(v,w) +\tilde{G}_{p}(w)),
\end{equation*}
with \(\tilde{c}_{p}(v,w)=(\tilde{f}_{p}^{-1})_{1}(v,w)- \tilde{G}_{p} (v)\); \(
 \tilde{d}_{p}(v,w)=(\tilde{f}_{p}^{-1})_{2}(v,w)-\tilde{G}_{p}(w)\); \( 
 \tilde{G}_{p} =D_{0}(\tilde{f}_{p}^{-1}).\)
\end{defin}
 
Consider   \begin{align*}\sigma_{1,p}&= \sup \{\Vert D_{(v,w)}(\tilde{a}_{p},\tilde{b}_{p})\Vert: (v,w)\in B^{k}(0,\beta_{i} )\times B^{d-k}(0,\beta_{i})\}\\
    \text{and} \quad   \sigma_{2,p}&= \sup \{\Vert D_{(v,w)}(\tilde{c}_{p},\tilde{d}_{p})\Vert: (v,w)\in B^{k}(0,\beta_{i+1} )\times B^{d-k}(0,\beta_{i+1})\}.
    \end{align*} 
Note that $\sigma_{1,p}$ and $\sigma_{2,p}$ depend  on $\beta_{i}$. Take  \(\sigma_{p}=\max\{\sigma_{1,p},\sigma_{2,p}\} .\)

\begin{lem}\label{primeirolem} Fix $\alpha\in (0,\frac{1-\lambda}{1+\lambda})$. For each $i\in \mathbb{Z}$ there exists $\beta_{i}$ such that \begin{equation}\label{sigmaadecuado}
\sigma_{i}:=\max_{p\in M_{i}}\sigma_{p}\leq \min\left\{ \frac{(\lambda^{-1}-\lambda)\alpha}{2(1+\alpha)^{2}},\frac{\lambda^{-1}(1-\alpha)-(1+\alpha)\alpha}{2(1+\alpha)}\right\}.
\end{equation}
\end{lem}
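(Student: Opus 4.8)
The plan is to show that the left-hand side $\sigma_i=\max_{p\in M_i}\sigma_p$ of \eqref{sigmaadecuado} can be pushed below any prescribed positive threshold by taking $\beta_i$ small, and then to verify that the right-hand side is a fixed positive constant depending only on $\lambda$ and $\alpha$. The whole point is that $\tilde a_p,\tilde b_p$ (and $\tilde c_p,\tilde d_p$) are the \emph{nonlinear remainders} of the local representatives, so their differentials vanish at the origin and are therefore small on a small enough ball.

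First I would record that the linear part $\tilde F_p=D_0(\tilde f_p)$ is block-diagonal with respect to $\mathbb{R}^d=\mathbb{R}^k\oplus\mathbb{R}^{d-k}$. Indeed, the splitting $E^s\oplus E^u$ is $D\textbf{\textit{f}}$-invariant and $\tau_p$ carries $E^s_p,E^u_p$ orthonormally onto $\mathbb{R}^k,\mathbb{R}^{d-k}$, so $D_0(\tilde f_p)$ preserves these coordinate subspaces; this is precisely why $\tilde F_p(v)$ appears only in the first component and $\tilde F_p(w)$ only in the second. Consequently $(\tilde a_p,\tilde b_p)(v,w)=\tilde f_p(v,w)-\tilde F_p(v,w)$ is exactly the difference between $\tilde f_p$ and its first-order Taylor polynomial at $0$, so by the definition of the derivative $D_{(0,0)}(\tilde a_p,\tilde b_p)=D_{(0,0)}\tilde f_p-\tilde F_p=0$. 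The same holds for $(\tilde c_p,\tilde d_p)$, whose linear part is $\tilde G_p=D_0(\tilde f_p^{-1})$.

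Next, since each $f_i$ is a $C^1$-diffeomorphism and $\exp$ and $\tau$ depend continuously on the base point, the map $(p,v,w)\mapsto D_{(v,w)}(\tilde a_p,\tilde b_p)$ is jointly continuous and vanishes on the zero section $\{(p,0,0):p\in M_i\}$. Because $M_i$ is compact, I would upgrade this pointwise vanishing to a uniform estimate: given $\eta>0$ there is $\beta_i>0$ such that $\Vert D_{(v,w)}(\tilde a_p,\tilde b_p)\Vert<\eta$ for all $p\in M_i$ and all $(v,w)\in B^k(0,\beta_i)\times B^{d-k}(0,\beta_i)$, whence $\sigma_{1,p}\le\eta$ for every such $p$. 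The identical argument applied to $\tilde f_p^{-1}$ gives $\sigma_{2,p}\le\eta$, after possibly shrinking $\beta_i$; taking the supremum over $p\in M_i$ yields $\sigma_i\le\eta$. Finally I would check that the right-hand side of \eqref{sigmaadecuado} is a fixed positive number: for $\lambda\in(0,1)$ one has $\lambda^{-1}-\lambda>0$, so the first term is positive, and the hypothesis $\alpha<\frac{1-\lambda}{1+\lambda}$ rearranges to $\lambda(1+\alpha)<1-\alpha$, i.e. $\lambda^{-1}(1-\alpha)>1+\alpha>(1+\alpha)\alpha$, so the second numerator is positive as well. Calling this constant $\eta_0=\eta_0(\lambda,\alpha)>0$, which depends neither on $i$ nor on $\beta_i$, the estimate of the previous paragraph applied with $\eta=\eta_0$ produces the required $\beta_i$.

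The main obstacle is the \textbf{uniformity over the compact component $M_i$}: everything reduces to controlling $D_{(v,w)}(\tilde a_p,\tilde b_p)$ simultaneously for all $p$, which requires genuine joint continuity of this expression in $(p,v,w)$. That continuity rests on the continuous dependence of $\tau_p$ and $\exp_p$ on $p$ (the former guaranteed by the continuity of the splitting, Proposition \ref{continuidade}, and Corollary \ref{mather2}) together with the continuity of $p\mapsto D_p f$. Once joint continuity and vanishing on the zero section are established, compactness of $M_i$ closes the argument; I do not expect the routine Taylor/continuity estimates to present any real difficulty.
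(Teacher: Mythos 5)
Your argument is essentially the paper's own proof: the author likewise computes that $\tilde{F}_p=D_0(\tilde{f}_p)$ respects the splitting $\mathbb{R}^k\oplus\mathbb{R}^{d-k}$ (by $D\textbf{\textit{f}}$-invariance), concludes $D_0(\tilde{a}_p)=D_0(\tilde{b}_p)=0$ and $D_0(\tilde{c}_p)=D_0(\tilde{d}_p)=0$, and then invokes the $C^1$-regularity of $f$ together with the compactness of $M_i$ to shrink $\beta_i$ until \eqref{sigmaadecuado} holds. Your added verification that the right-hand side is a positive constant for $\alpha\in\bigl(0,\frac{1-\lambda}{1+\lambda}\bigr)$ is the remark the paper makes immediately after the lemma, so the two proofs match in full.
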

\begin{proof} 
Note that $D_{0}(\tilde{f}_{p})=\tau_{f(p)}D_{p}f\tau_{p}^{-1}$. Hence, if $(v,w)\in \mathbb{R}^{k} \oplus \mathbb{R}^{d-k}$, we have
\begin{align*}(\tilde{F}_{p}v,\tilde{F}_{p}w)&=(\tau_{f(p)}D_{p}f\tau_{p}^{-1}(v),\tau_{f(p)}D_{p}f\tau_{p}^{-1}(w))=\tau_{f(p)}D_{p}f\tau_{p}^{-1}(v,w)\\
&=D_{0}(\tilde{f}_{p})(v,w)
=(D_{0}(\tilde{f}_{p})_{1}(v,w),D_{0}(\tilde{f}_{p})_{2}(v,w)).
\end{align*}
Consequently,   
$D_{0}(\tilde{a}_{p})=0$ and $ D_{0}(\tilde{b}_{p})=0.$ 
 Analogously, we can prove that   $D_{0}(\tilde{c}_{p})=0$ and $ D_{0}(\tilde{d}_{p})=0.$ 
Thus, since    $f$ is of class $C^{1}$ and   $M_{i}$ is compact, it follows that for each  $i$ we can choose  $\beta_{i}$ small enough such that   \eqref{sigmaadecuado} is valid.
\end{proof}
 
   We chose $\alpha \in (0, \frac{1- \lambda}{1+ \lambda})$ for the minimum in \eqref{sigmaadecuado} be positive.  Set 
 \begin{align*}K_{\alpha}^{s}& =\{(v,w)\in \mathbb{R}^{k}\oplus \mathbb{R}^{d-k}: \Vert w\Vert< \alpha \Vert v \Vert\};\\
K_{\alpha}^{u}  &=\{(v,w)\in \mathbb{R}^{k}\oplus \mathbb{R}^{d-k}: \Vert v\Vert< \alpha \Vert w\Vert\}.
\end{align*}  

\begin{lem}\label{limitadocones12}  Let  $\alpha\in (0,\frac{1-\lambda}{1+\lambda})$ and $\beta_{i}$  be as in    Lemma \eqref{primeirolem}. Thus, there exists a $\varepsilon_{i}>0$   such that,  if  $g\in \textbf{D}_{i}$  with $d_{\textbf{D}_{i}}(f_{i},g)<\varepsilon_{i}$, for all $p\in M_{i}$ we have: 
\begin{enumerate}[i.]
\item  $D_{(v,w)}\tilde{g}_{p}
(\overline{K_{\alpha}^{u} })
\subseteq K_{\alpha}^{u}$ for all   $(v,w)\in B^{k}(0,\beta_{i} )\times B^{d-k}(0,\beta_{i} )$, and
\item $
 D_{(v,w)}\tilde{g}_{p} ^{-1}(\overline{K_{\alpha}^{s}})
\subseteq K_{\alpha}^{s}$ for all   $(v,w)\in B^{k}(0,\beta_{i+1} )\times B^{d-k}(0,\beta_{i+1} )$.
\end{enumerate}
\end{lem}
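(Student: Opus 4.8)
The plan is to reduce both inclusions to a single block estimate for $D_{(v,w)}\tilde{g}_p$ relative to the splitting $\mathbb{R}^{k}\oplus\mathbb{R}^{d-k}$, to verify that estimate first for $\tilde{f}_p$ with a definite margin coming from \eqref{sigmaadecuado}, and finally to transport it to $\tilde{g}_p$ by a $C^{1}$-perturbation argument.

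First I would record the block structure of $D_{(v,w)}\tilde{f}_p$. Since we have reduced to the strictly Anosov case and $\tau_p$ carries orthonormal bases of $E^{s}_p,E^{u}_p$ isometrically onto those of $\mathbb{R}^{k},\mathbb{R}^{d-k}$, the linear map $\tilde{F}_p=D_0\tilde{f}_p=\tau_{f(p)}D_pf\,\tau_p^{-1}$ is block diagonal, contracting $\mathbb{R}^{k}$ by at most $\lambda$ and expanding $\mathbb{R}^{d-k}$ by at least $\lambda^{-1}$ (this is the linear content of Lemma \ref{lemacones}, applied with $n=1$). By the definition of $\tilde{a}_p,\tilde{b}_p$ we have $D_{(v,w)}\tilde{f}_p=\tilde{F}_p+D_{(v,w)}(\tilde{a}_p,\tilde{b}_p)$, and the second summand has operator norm at most $\sigma_{1,p}\leq\sigma_i$; in particular each of its four blocks has norm at most $\sigma_i$. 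Writing $(\xi',\eta')=D_{(v,w)}\tilde{f}_p(\xi,\eta)$ this yields
\[
\Vert \xi'\Vert \leq (\lambda+\sigma_i)\Vert\xi\Vert + \sigma_i\Vert\eta\Vert, \qquad \Vert\eta'\Vert \geq (\lambda^{-1}-\sigma_i)\Vert\eta\Vert - \sigma_i\Vert\xi\Vert.
\]

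Next I would run the cone computation for $\tilde{f}_p$. For $(\xi,\eta)\in\overline{K_{\alpha}^{u}}$, i.e.\ $\Vert\xi\Vert\leq\alpha\Vert\eta\Vert$, the estimates above give $\Vert\xi'\Vert\leq[\lambda\alpha+\sigma_i(1+\alpha)]\Vert\eta\Vert$ and $\Vert\eta'\Vert\geq[\lambda^{-1}-\sigma_i(1+\alpha)]\Vert\eta\Vert$, so the inclusion $\Vert\xi'\Vert\leq\alpha\Vert\eta'\Vert$ reduces to $\sigma_i(1+\alpha)^{2}\leq\alpha(\lambda^{-1}-\lambda)$. The first entry of the minimum in \eqref{sigmaadecuado} is exactly half of this critical value, while the second entry keeps $\lambda^{-1}-\sigma_i(1+\alpha)$ bounded away from $0$; hence $D_{(v,w)}\tilde{f}_p(\overline{K_{\alpha}^{u}})\subseteq K_{\alpha}^{u}$ with a margin uniform in $p\in M_i$ and $(v,w)$. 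The identical computation for $\tilde{f}_p^{-1}$, now using the block bound $\sigma_{2,p}\leq\sigma_i$ together with the cone $\overline{K_{\alpha}^{s}}$, gives $D_{(v,w)}\tilde{f}_p^{-1}(\overline{K_{\alpha}^{s}})\subseteq K_{\alpha}^{s}$ with the same margin.

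Finally I would pass from $f$ to $g$. The map $\tilde{g}_p=\tau_{f(p)}\exp_{f(p)}^{-1}g\,\exp_p\tau_p^{-1}$ differs from $\tilde{f}_p$ only in the factor $g$, composed with the fixed $C^{1}$ charts $\exp_p,\exp_{f(p)}^{-1}$ and isometries $\tau_p,\tau_{f(p)}$ on balls of fixed radius $\beta_i$; consequently $g\mapsto D_{(v,w)}\tilde{g}_p$ is continuous in the $C^{1}$ distance, and by compactness of $M_i$ this continuity is uniform in both $p\in M_i$ and $(v,w)$ in the relevant balls. Therefore there is $\varepsilon_i>0$ so that $d_{\textbf{D}_i}(f_i,g)<\varepsilon_i$ forces $\sup_{p,(v,w)}\Vert D_{(v,w)}\tilde{g}_p-D_{(v,w)}\tilde{f}_p\Vert$ below the margin of the previous step; the block estimates for $\tilde{g}_p$ then satisfy the full threshold $\sigma(1+\alpha)^{2}\leq\alpha(\lambda^{-1}-\lambda)$, yielding i., and the analogous bound for $\tilde{g}_p^{-1}$ yields ii. I expect the only delicate point to be this last uniformity: that a single $\varepsilon_i$ controls $D_{(v,w)}\tilde{g}_p$ simultaneously for all $p\in M_i$ and all base points $(v,w)$, which rests on the compactness of the single component $M_i$ and on the uniform positivity of $\beta_i$ there; note that no uniformity across different $i$ is required, since $\varepsilon_i$ is allowed to depend on $i$.
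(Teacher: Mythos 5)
Your proposal is correct and takes essentially the same route as the paper's proof: both split $D_{(v,w)}\tilde{g}_{p}$ into the block-diagonal part $\tilde{F}_{p}$, the nonlinear part $D_{(v,w)}(\tilde{a}_{p},\tilde{b}_{p})$ bounded by $\sigma_{i}$, and a $C^{1}$-perturbation term, and both rest on the observation that the factor $2$ in \eqref{sigmaadecuado} is exactly what lets the cone condition $2\sigma_{i}(1+\alpha)^{2}<\alpha(\lambda^{-1}-\lambda)$ absorb the two error contributions. Your explicit appeal to uniform (in $p$ and $(v,w)$) continuity of $g\mapsto D_{(v,w)}\tilde{g}_{p}$ via compactness of $M_{i}$ is merely a more careful rendering of the paper's choice $\varepsilon_{i}<\min\{\beta_{i},\beta_{i+1},\sigma_{i}\}$ (the paper defers the chart-distortion issue to its remark that $D_{0}\mathrm{exp}_{p}=Id$ and $\beta_{i}$ may be shrunk), so the difference is organizational rather than substantive.
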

\begin{proof} We will prove   i.  Take  
$ \varepsilon_{i} < \min\{\beta_{i},\beta_{i+1},\sigma_{i} \}
 .$ 
Fix $(v,w)\in  B^{k}(0,\beta_{i} )\times B^{d-k}(0,\beta_{i} )$. If $(x,y)\in \overline{K_{\alpha}^{u}} \setminus\{(0,0)\}$, then   
 \begin{align*}\Vert (D_{(v,w)}\tilde{g}_{p}(x,y))_{1}\Vert&\leq \Vert (D_{(v,w)}\tilde{g}_{p}(x,y))_{1}-(D_{(v,w)}\tilde{f}_{p}(x,y))_{1}\Vert +\Vert (D_{(v,w)}\tilde{f}_{p}(x,y))_{1}\Vert\\
& \leq  \sigma_{i} (\alpha\Vert y\Vert +\Vert y\Vert)+\sigma_{i}\Vert (x,y)\Vert+\lambda\Vert x\Vert \leq  ((\alpha+1)2\sigma_{i} +\lambda\alpha)\Vert y\Vert.
\end{align*}
Analogously, we have 
\(\Vert (D_{(v,w)}\tilde{g}_{p}(x,y))_{2}\Vert \geq(\lambda^{-1} -2\sigma_{i}(\alpha+1))\Vert y\Vert.\)
Since $\sigma_{i} <\frac{\alpha( 
 \lambda^{-1}-\lambda)}{2(1+\alpha)^{2}}$,  then   $\frac{(\alpha+1)2\sigma_{i}+\lambda \alpha}{\lambda^{-1}-2\sigma_{i}(\alpha+1)} < \alpha ,$ and hence,  $\Vert  (D_{(v,w)}\tilde{g}_{p}(x,y))_{1}\Vert    
 <\alpha \Vert (D_{(v,w)}\tilde{g}_{p}(x,y))_{2}\Vert.$
Therefore,   $D_{(v,w)}\tilde{g}_{p}(x,y)\in K_{\alpha}^{u}.$ Consequently,  $D_{(v,w)}\tilde{g}_{p}
(\overline{K_{\alpha}^{u} })
\subseteq K_{\alpha}^{u}$. 
\end{proof}

\begin{lem}\label{limitadocontract12} If $\varepsilon_{i}<\min\{\beta_{i},\beta_{i+1},\sigma_{i}\}$, there exists $\eta<1$ such that, 
if $g\in \textbf{D}_{i}$ is such that  $d_{\textbf{D}_{i}}(f_{i},g)<\varepsilon_{i},$ then,  for $p\in M_{i},$
\begin{enumerate}[i.]
\item $\Vert D_{(v,w)}\tilde{g}_{p}(x,y)\Vert \geq \eta^{-1}\Vert (x,y) \Vert$ if $(x,y)\in \overline{K_{\alpha}^{u}}$;
\item $\Vert   D_{(v,w)}\tilde{g}^{-1} _{p}(x,y)\Vert\geq \eta^{-1}\Vert (x,y)\Vert$ if $(x,y)\in \overline{K_{\alpha}^{s}}$.
\end{enumerate}
\end{lem}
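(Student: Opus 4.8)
The plan is to prove the expansion estimate by mimicking the structure of the exact strictly Anosov case in Lemma \ref{lemacones}, reusing the perturbative derivative bounds already produced inside the proof of Lemma \ref{limitadocones12}. I would prove part i.\ in full and obtain part ii.\ by the symmetric argument (exchanging the roles of the two factors, of $\tilde{g}_{p}$ and $\tilde{g}_{p}^{-1}$, and of $\sigma_{1,p}$ and $\sigma_{2,p}$), so that a single constant serves both. Since the hypothesis $d_{\textbf{D}_{i}}(f_{i},g)<\varepsilon_{i}<\min\{\beta_{i},\beta_{i+1},\sigma_{i}\}$ places us in exactly the regime treated in Lemma \ref{limitadocones12}, every estimate from that proof is available here verbatim.

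For part i.\ I would first bound the full norm below by the unstable component: since the splitting $\mathbb{R}^{k}\oplus\mathbb{R}^{d-k}$ is orthogonal, $\Vert D_{(v,w)}\tilde{g}_{p}(x,y)\Vert\geq \Vert (D_{(v,w)}\tilde{g}_{p}(x,y))_{2}\Vert$ for every $(x,y)$. For $(x,y)\in\overline{K_{\alpha}^{u}}$ the proof of Lemma \ref{limitadocones12} already gives
\[
\Vert (D_{(v,w)}\tilde{g}_{p}(x,y))_{2}\Vert\geq (\lambda^{-1}-2\sigma_{i}(1+\alpha))\Vert y\Vert .
\]
Next I would feed in the first of the two bounds in \eqref{sigmaadecuado}, namely $\sigma_{i}\leq \frac{(\lambda^{-1}-\lambda)\alpha}{2(1+\alpha)^{2}}$, which yields $\lambda^{-1}-2\sigma_{i}(1+\alpha)\geq \frac{\lambda^{-1}+\lambda\alpha}{1+\alpha}$. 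Finally the cone condition $\Vert x\Vert\leq\alpha\Vert y\Vert$ gives $\Vert(x,y)\Vert\leq(1+\alpha)\Vert y\Vert$, i.e.\ $\Vert y\Vert\geq \Vert(x,y)\Vert/(1+\alpha)$, so that
\[
\Vert D_{(v,w)}\tilde{g}_{p}(x,y)\Vert\geq \frac{\lambda^{-1}+\lambda\alpha}{(1+\alpha)^{2}}\,\Vert (x,y)\Vert .
\]
I would therefore set $\eta^{-1}=\frac{\lambda^{-1}+\lambda\alpha}{(1+\alpha)^{2}}$. The crucial point is that this number depends only on $\lambda$ and $\alpha$, hence the resulting $\eta$ is independent of $i$, of $p\in M_{i}$ and of the chosen $g$; this uniformity is exactly what is needed for the openness argument over the non-compact space $\textbf{M}$.

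The main obstacle is to verify that this choice actually satisfies $\eta<1$, equivalently $\lambda^{-1}+\lambda\alpha>(1+\alpha)^{2}$ on the whole admissible range $\alpha\in(0,\frac{1-\lambda}{1+\lambda})$. I would handle this by studying $h(\alpha)=\lambda^{-1}+\lambda\alpha-(1+\alpha)^{2}$, which is a downward parabola, so it suffices to check positivity at the two endpoints and invoke concavity. One has $h(0)=\lambda^{-1}-1>0$, and at $\alpha=\frac{1-\lambda}{1+\lambda}$ a direct reduction gives $h=\frac{(1-\lambda)\,(\lambda^{2}(\lambda+1)+(1-\lambda))}{\lambda(1+\lambda)^{2}}$, which is positive for every $\lambda\in(0,1)$; concavity then forces $h>0$ on the whole interval. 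This is precisely the role of the first bound in \eqref{sigmaadecuado} (the second bound, which at its extreme only yields the weaker factor $\alpha(\lambda^{-1}+1+\alpha)$, is not what drives the expansion here). Part ii.\ is entirely analogous: one bounds $\Vert D_{(v,w)}\tilde{g}_{p}^{-1}(x,y)\Vert$ below by its stable component, uses the companion estimate $\Vert (D_{(v,w)}\tilde{g}_{p}^{-1}(x,y))_{1}\Vert\geq(\lambda^{-1}-2\sigma_{i}(1+\alpha))\Vert x\Vert$ for $(x,y)\in\overline{K_{\alpha}^{s}}$ (where now $\Vert y\Vert\leq\alpha\Vert x\Vert$), and arrives at the same constant, so the single $\eta<1$ above works for both parts.
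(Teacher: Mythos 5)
Your proof is correct, and it follows the same perturbative cone strategy as the paper, but with genuinely different bookkeeping at the two decisive steps. The paper's proof starts from $\Vert D_{(v,w)}\tilde{g}_{p}(x,y)\Vert \geq \Vert D_{(v,w)}\tilde{f}_{p}(x,y)\Vert - \varepsilon_{i}\Vert(x,y)\Vert$, then uses the cone inequality $\Vert(D_{(v,w)}\tilde{f}_{p}(x,y))_{1}\Vert\leq\alpha\Vert(D_{(v,w)}\tilde{f}_{p}(x,y))_{2}\Vert$ to extract the factor $(1-\alpha)$, arriving at $\eta^{-1}=(1-\alpha)\bigl(\frac{\lambda^{-1}}{1+\alpha}-\sigma_{i}\bigr)-\sigma_{i}$, and certifies $\eta<1$ from the \emph{second} bound in \eqref{sigmaadecuado}; you instead project directly onto the unstable factor, $\Vert D_{(v,w)}\tilde{g}_{p}(x,y)\Vert\geq\Vert(D_{(v,w)}\tilde{g}_{p}(x,y))_{2}\Vert\geq(\lambda^{-1}-2\sigma_{i}(1+\alpha))\Vert y\Vert$ (the estimate already displayed inside the proof of Lemma \ref{limitadocones12}), and certify $\eta<1$ from the \emph{first} bound in \eqref{sigmaadecuado}. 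Your algebra checks out: $\sigma_{i}\leq\frac{(\lambda^{-1}-\lambda)\alpha}{2(1+\alpha)^{2}}$ does give $\lambda^{-1}-2\sigma_{i}(1+\alpha)\geq\frac{\lambda^{-1}+\lambda\alpha}{1+\alpha}$, the endpoint value of $h(\alpha)=\lambda^{-1}+\lambda\alpha-(1+\alpha)^{2}$ at $\alpha=\frac{1-\lambda}{1+\lambda}$ does factor as $\frac{(1-\lambda)(\lambda^{2}(1+\lambda)+(1-\lambda))}{\lambda(1+\lambda)^{2}}>0$, and concavity of $h$ legitimately propagates positivity over the whole interval; the symmetric argument for part ii.\ works because in $\overline{K_{\alpha}^{s}}$ one has $\Vert y\Vert\leq\alpha\Vert x\Vert$ and $\Vert\tilde{G}_{p}(x)\Vert\geq\lambda^{-1}\Vert x\Vert$ for a strictly Anosov family. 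What your route buys is an explicit $\eta^{-1}=\frac{\lambda^{-1}+\lambda\alpha}{(1+\alpha)^{2}}$ depending only on $\lambda$ and $\alpha$, hence manifestly uniform in $i$, $p$ and $g$; the paper's $\eta$ formally contains $\sigma_{i}$ and is uniform only because \eqref{sigmaadecuado} bounds $\sigma_{i}$ by a quantity independent of $i$ --- a point the paper leaves implicit although the uniformity is exactly what Lemma \ref{fred} and Theorem \ref{teoremaprin} need. One caveat applying to you and the paper equally: both proofs use that $d_{\textbf{D}_{i}}(f_{i},g)<\varepsilon_{i}$ yields $\Vert D_{(v,w)}\tilde{g}_{p}-D_{(v,w)}\tilde{f}_{p}\Vert\leq\varepsilon_{i}$ in the exponential charts, which costs a chart-distortion factor absorbed by shrinking $\beta_{i}$; since you reuse the paper's estimates verbatim, this is not a new gap.
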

\begin{proof} We will prove i.    Let $g\in \textbf{D}_{i}$ be such that   $d_{\textbf{D}_{i}}(f_{i},g)<\varepsilon_{i}$. Fix    $p\in M_{i}$ and take
 $ (x,y)\in \overline{K_{\alpha}^{u}}$. By Lemma \ref{limitadocones12} we have  $\Vert (D_{(v,w)}\tilde{f}_{p}(x,y))_{1}\Vert\leq \alpha\Vert  (D_{(v,w)}\tilde{f}_{p}(x,y))_{2}\Vert
$ for $(v,w)\in B^{k}(0,\beta_{i})\times B^{d-k}(0,\beta_{i})$. Thus, 
\begin{align*}
\Vert D_{(v,w)}\tilde{g}_{p}(x,y)\Vert &\geq \Vert D_{(v,w)}\tilde{f}_{p}(x,y)\Vert   -\Vert D_{(v,w)}\tilde{f}_{p}(x,y) -D_{(v,w)}\tilde{g}_{p}(x,y)\Vert\\
&\geq  \Vert (D_{(v,w)}\tilde{f}_{p}(x,y))_{2}\Vert-\Vert (D_{(v,w)}\tilde{f}_{p}(x,y))_{1}\Vert-\varepsilon_{i}\Vert (x,y)\Vert\\
&\geq  (1-\alpha)(\Vert \tilde{F}_{p}(y)\Vert -\Vert D_{(v,w)}\tilde{b}_{p}(x,y)\Vert)   -\sigma_{i}\Vert (x,y)\Vert\\
&\geq  (1-\alpha)(\frac{\lambda^{-1}}{1+\alpha}\Vert (x,y)\Vert -\sigma_{i}\Vert (x,y)\Vert)   -\sigma_{i}\Vert (x,y)\Vert.
\end{align*}
Consequently, \(\Vert D_{(v,w)}\tilde{g}_{p}(x,y)\Vert\geq 
   \frac{1}{\eta}\Vert (x,y)\Vert,\)
where $\frac{1}{\eta}:=(1-\alpha)(\frac{\lambda^{-1}}{1+\alpha} -\sigma_{i}) -\sigma_{i}$. 
  Since $\sigma_{i} < \frac{(1-\alpha)\lambda^{-1}-(1+\alpha)}{2(1+\alpha)},$  \(\eta<1\).   
\end{proof}

  Fix   $\textbf{\textit{g}}=(g_{i})_{i\in\mathbb{Z}}\in B(\textbf{\textit{f}}, (\varepsilon_{i})_{i\in\mathbb{Z}}).$
For each $i\in\mathbb{Z}$, let $m_i\in \mathbb{N}$  be such that  $M_{i}=\cup _{j=1}^{m_{j}}B(p_{j,i},\beta_{i})$, where $p_{j,i}\in M_{i}$, for  $j=1,...,m_i$.  Take the set of charts $$\phi_{j,i}:B^{k}(0,\beta_{i})\times B^{d-k}(0,\beta_{i})\rightarrow B(p_{j,i},\beta_{i})  \text{ where }\phi_{j,i}=\text{exp}_{p_{j,i}}\circ\tau_{p_{j,i}} ^{-1}.$$ It follows from  Lemmas \ref{limitadocones12} and  \ref{limitadocontract12} that:

\begin{lem} For all $i\in \mathbb{Z}$ and $j=1,...,m_i$: 
\begin{enumerate}[i.]
\item $M_i= \bigcup _{j=1}^{m_{i}}\phi_{j,i}(B^{k}(0,\beta_{i})\times B^{d-k}(0,\beta_{i}))$,
\item $\phi_{j,i+1}^{-1}\textbf{\textit{g}}\phi_{j,i}(B^{k}(0,\beta_{i})\times B^{d-k}(0,\beta_{i}))\subseteq B^{k}(0,\delta_{i+1})\times B^{d-k}(0,\delta_{i+1}).$ 
\item  $ \phi_{j,i}^{-1}\textbf{\textit{g}}^{-1}\phi_{j,i+1}(B^{k}(0,\beta_{i+1})\times B^{d-k}(0,\beta_{i+1}))\subseteq B^{k}(0,\delta_{i})\times B^{d-k}(0,\delta_{i}).$
\item  For all   $v\in B^{k}(0,\beta_{i} )\times B^{d-k}(0,\beta_{i} )$,     if $x\in \overline{K_{\alpha}^{u}},$ we have \[D_{v}(\phi_{j,i+1}^{-1}\textbf{\textit{g}}\phi_{j,i})
(\overline{K_{\alpha}^{u} })
\subseteq K_{\alpha}^{u}\quad \text{and}\quad\Vert D_{v}(\phi_{j,i+1}^{-1}\textbf{\textit{g}}\phi_{j,i})(x)\Vert \geq \eta^{-1}\Vert x \Vert.\]
\item For all   $v\in B^{k}(0,\beta_{i+1} )\times B^{d-k}(0,\beta_{i+1} )$,   if $x\in \overline{K_{\alpha}^{s}}$,  we have \[ D_{v}(\phi_{j,i}^{-1}\textbf{\textit{g}}^{-1}\phi_{j,i+1})(\overline{K_{\alpha}^{s}})
\subseteq K_{\alpha}^{s}\quad\text{and}\quad\Vert   D_{v}(\phi_{j,i}^{-1}\textbf{\textit{g}}^{-1}\phi_{j,i+1})(x)\Vert\geq \eta^{-1}\Vert x\Vert.\] 
\end{enumerate}
\end{lem}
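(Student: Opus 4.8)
The plan is to reduce the statement entirely to the local analysis already completed in Lemmas \ref{limitadocones12} and \ref{limitadocontract12}, by carefully unwinding the definitions of the charts $\phi_{j,i}=\text{exp}_{p_{j,i}}\circ\tau_{p_{j,i}}^{-1}$ and the local representatives $\tilde{g}_{p}=\tau_{f(p)}\circ\text{exp}_{f(p)}^{-1}\circ g\circ\text{exp}_{p}\circ\tau_{p}^{-1}$. First I would address (i): it is simply the covering $M_i=\bigcup_{j=1}^{m_i}B(p_{j,i},\beta_i)$ rewritten through the charts, since each $\phi_{j,i}$ maps $B^k(0,\beta_i)\times B^{d-k}(0,\beta_i)$ diffeomorphically onto $B(p_{j,i},\beta_i)$, and the latter is exactly $\text{exp}_{p_{j,i}}(B_{p_{j,i}}(0,\beta_i))$ because $\tau_{p_{j,i}}$ is an isometry onto $\mathbb{R}^d$.

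Next I would handle (ii) and (iii), which are the well-definedness (containment) statements. These follow from the inclusions \eqref{betan} in the Remark: with $\textbf{\textit{g}}\in B(\textbf{\textit{f}},(\varepsilon_i))$ and $\varepsilon_i<\min\{\beta_i,\beta_{i+1},\sigma_i\}\le\beta_i$, we have $d_{\textbf{D}_i}(f_i,g_i)<\beta_i$, so $g(B(p,\beta_i))\subseteq B(f(p),\delta_{i+1})$. Translating through $\text{exp}^{-1}$ and $\tau$ (both isometries onto their images, so radii are preserved) converts the ball $B(p,\beta_i)\subseteq M_i$ into $B^k(0,\beta_i)\times B^{d-k}(0,\beta_i)$ and $B(f(p),\delta_{i+1})$ into $B^k(0,\delta_{i+1})\times B^{d-k}(0,\delta_{i+1})$; this is precisely the assertion that $\phi_{j,i+1}^{-1}\textbf{\textit{g}}\phi_{j,i}=\tilde{g}_{p_{j,i}}$ maps into the larger polydisc, and symmetrically for (iii) using $g^{-1}(B(f(p),\beta_{i+1}))\subseteq B(p,\delta_i)$.

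Finally, (iv) and (v) are the heart of the matter, and here the key observation is the chart identity $\phi_{j,i+1}^{-1}\,\textbf{\textit{g}}\,\phi_{j,i}=\tilde{g}_{p_{j,i}}$ directly from the definitions, so that $D_v(\phi_{j,i+1}^{-1}\textbf{\textit{g}}\phi_{j,i})=D_v\tilde{g}_{p_{j,i}}$. With this identification, (iv) is nothing but the conjunction of conclusion i.\ of Lemma \ref{limitadocones12} (cone invariance $D_{(v,w)}\tilde{g}_p(\overline{K_\alpha^u})\subseteq K_\alpha^u$) and conclusion i.\ of Lemma \ref{limitadocontract12} (expansion $\Vert D_{(v,w)}\tilde{g}_p(x,y)\Vert\ge\eta^{-1}\Vert(x,y)\Vert$ on $\overline{K_\alpha^u}$), both of which apply because $\varepsilon_i<\min\{\beta_i,\beta_{i+1},\sigma_i\}$ meets the hypotheses of those lemmas; similarly (v) combines conclusion ii.\ of each lemma for the inverse map on $\overline{K_\alpha^s}$.

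I expect no serious obstacle, as this lemma is essentially a bookkeeping corollary whose content has been established pointwise; the only point requiring a little care is verifying that $\phi_{j,i+1}^{-1}\textbf{\textit{g}}\phi_{j,i}$ coincides with $\tilde{g}_{p_{j,i}}$ (hence transporting the invariant cones $K_\alpha^u,K_\alpha^s\subseteq\mathbb{R}^d$ correctly through the fixed isometry $\tau_{p_{j,i}}$, which preserves the splitting $\mathbb{R}^k\oplus\mathbb{R}^{d-k}$ and all norms by construction), and checking that the single choice $\varepsilon_i<\min\{\beta_i,\beta_{i+1},\sigma_i\}$ simultaneously satisfies the hypotheses of both preceding lemmas. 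Once these identifications are in place, the lemma follows immediately, so I would present it as a short deduction rather than a fresh argument.
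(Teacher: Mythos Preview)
Your proposal is correct and follows exactly the paper's approach: the paper offers no separate proof, stating only that the lemma ``follows from Lemmas \ref{limitadocones12} and \ref{limitadocontract12},'' and your write-up simply unpacks that one-line justification by identifying $\phi_{j,i+1}^{-1}\textbf{\textit{g}}\phi_{j,i}$ with the local representative $\tilde{g}_{p_{j,i}}$ and reading off items (i)--(iii) from the covering and \eqref{betan}, and (iv)--(v) from the cone-invariance and expansion estimates already proved. The only caveat is that the literal equality $\phi_{j,i+1}^{-1}\textbf{\textit{g}}\phi_{j,i}=\tilde{g}_{p_{j,i}}$ presumes the target chart is centered at $f(p_{j,i})$ rather than an independently chosen $p_{j,i+1}$, but this is a notational imprecision already present in the paper rather than a gap in your argument.
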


Hence, since $D_{0}\text{exp}_{p}=Id_{T_{p}M}$, $\tilde{g}_{p}=\tau_{f(p)}\circ\text{exp}_{f(p)}^{-1}\circ g_{i}\circ\text{exp}_{p}\circ\tau_{p} ^{-1}$ and $\tau_{p}$ is an isometry,  by choosing $\beta_{i}$  even small, if necessary, we have:

\begin{lem}\label{fred} There exists $\eta\in (0,1)$ such that, if $\textbf{\textit{g}}\in B(\textbf{\textit{f}},(\varepsilon_{i})_{i\in\mathbb{Z}})$,     for each $p\in\textbf{M}$ we have:
\begin{enumerate}[i.]
\item $ D_{p}\textbf{\textit{g}}(K_{\alpha,\textbf{\textit{f}},p}^{u})\subseteq K_{\alpha,\textbf{\textit{f}},\textbf{\textit{g}}(p)}^{u}$. Furthermore,   \(\Vert D_{p}\textbf{\textit{g}}(v )\Vert \geq \eta ^{-1}  \Vert v \Vert \)  if  \(v\in  K_{\alpha,\textbf{\textit{f}},p}^{u}.\)
\item $ D_{\textbf{\textit{g}}(p)}\textbf{\textit{g}}^{-1}(K_{\alpha,\textbf{\textit{f}},\textbf{\textit{g}}(p)}^{s})\subseteq K_{\alpha,\textbf{\textit{f}},p}^{s}$. Furthermore,       \(\Vert D_{\textbf{\textit{g}}(p)}\textbf{\textit{f}}^{-1}(v )\Vert \geq  \eta^{-1}  \Vert v\Vert\) if \(v \in  K_{\alpha,\textbf{\textit{f}},\textbf{\textit{g}}(p)}^{s}.\)
 \end{enumerate}
\end{lem}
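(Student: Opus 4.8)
The plan is to transfer the cone‑invariance and expansion estimates obtained for the straightened map $\tilde g_p$ in Lemmas \ref{limitadocones12} and \ref{limitadocontract12}, evaluated at the base point $(v,w)=0$, to the intrinsic derivative $D_p\textbf{\textit{g}}$ acting on $T\textbf{M}$. The bridge between the two pictures is the map $\tau_p$: it preserves $\Vert\cdot\Vert$ and carries orthonormal bases of $E^s_p$ and $E^u_p$ to orthonormal bases of $\mathbb{R}^k$ and $\mathbb{R}^{d-k}$, so it maps the intrinsic cones \emph{exactly} onto the standard ones, $\tau_p(K^u_{\alpha,\textbf{\textit{f}},p})=K^u_\alpha$ and $\tau_p(K^s_{\alpha,\textbf{\textit{f}},p})=K^s_\alpha$. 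Thus, up to conjugation by the norm‑preserving isomorphisms $\tau$, the statement of Lemma \ref{fred} at a point $p$ is precisely the statement already proved for $D_0\tilde g_p$, provided one controls the discrepancy coming from the fact that $\tilde g_p$ is straightened with the frame $\tau_{f(p)}$ centered at $f(p)$, while the target cone lives at $\textbf{\textit{g}}(p)$.

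First I would make the identification precise at a general $p\in M_i$. Writing $q=\textbf{\textit{g}}(p)$ and using $D_0\exp_p=\mathrm{Id}_{T_pM}$, the chain rule gives $D_0\tilde g_p=\tau_{f(p)}\circ D_q(\exp_{f(p)}^{-1})\circ D_p\textbf{\textit{g}}\circ\tau_p^{-1}$, which differs from $\tau_{f(p)}\circ D_p\textbf{\textit{g}}\circ\tau_p^{-1}$ only through the factor $D_q(\exp_{f(p)}^{-1})$. Since $\textbf{\textit{g}}\in B(\textbf{\textit{f}},(\varepsilon_i)_{i\in\mathbb{Z}})$ with $\varepsilon_i<\beta_i$, the image $q$ lies within $\beta_{i+1}$ of $f(p)$; because $D_{f(p)}(\exp_{f(p)}^{-1})=\mathrm{Id}$ and $\exp_{f(p)}^{-1}$ is $C^1$, compactness of $M_i$ together with shrinking $\beta_i$ makes $D_q(\exp_{f(p)}^{-1})$ uniformly $C^0$‑close to the identity. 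A second, analogous error is the mismatch between the frame $\tau_{f(p)}$ used to straighten and the frame $\tau_{q}$ defining the target cone $K^u_{\alpha,\textbf{\textit{f}},q}$; by continuity of $p\mapsto(E^s_p,E^u_p,\tau_p)$ and, again, shrinking $\beta_i$, the linear map $\tau_{q}\circ\tau_{f(p)}^{-1}$ is likewise as close to $\mathrm{Id}$ as desired.

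Having arranged that $\tau_{q}\circ D_p\textbf{\textit{g}}\circ\tau_p^{-1}$ equals $D_0\tilde g_p$ post‑ and pre‑composed with linear maps arbitrarily close to $\mathrm{Id}$, I would use that the inclusion of the closed cone $\overline{K^u_\alpha}$ into the open cone $K^u_\alpha$ and the strict expansion $\Vert\cdot\Vert\geq\eta^{-1}\Vert\cdot\Vert$ are \emph{open} conditions in the $C^0$ topology on linear maps: by Lemma \ref{limitadocones12} the image $D_0\tilde g_p(\overline{K^u_\alpha})$ actually sits in a strictly smaller cone, so there is a definite margin that survives composition with maps sufficiently close to the identity, at the cost of replacing $\eta^{-1}$ by a slightly smaller but still larger‑than‑one constant. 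Fixing a uniform $\eta'\in(\eta,1)$ and, for each $i$, choosing $\beta_i$ (hence $\varepsilon_i$) small enough that all the above errors keep the expansion factor above $1/\eta'$, and recalling that $\tau_p,\tau_q$ preserve $\Vert\cdot\Vert$, I conclude $D_p\textbf{\textit{g}}(K^u_{\alpha,\textbf{\textit{f}},p})\subseteq K^u_{\alpha,\textbf{\textit{f}},\textbf{\textit{g}}(p)}$ with $\Vert D_p\textbf{\textit{g}}(v)\Vert\geq(\eta')^{-1}\Vert v\Vert$, which is part i.; part ii. follows verbatim applying the same argument to $\tilde g_p^{-1}$, Lemma \ref{limitadocontract12}, and the stable cones.

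The step I expect to be the main obstacle is the moving‑frame bookkeeping in the second paragraph: the straightening is centered at $f(p)$ with frame $\tau_{f(p)}$, whereas the target cone lives at $\textbf{\textit{g}}(p)$ with its own frame $\tau_{\textbf{\textit{g}}(p)}$, and one must control both the nonlinearity of $\exp_{f(p)}^{-1}$ away from its center and the rotation $\tau_{\textbf{\textit{g}}(p)}\circ\tau_{f(p)}^{-1}$ uniformly over $M_i$. Compactness of each $M_i$ and the freedom to shrink $\beta_i$ per index are exactly what make this controllable while still yielding a single $\eta\in(0,1)$ valid for all $i$, since the fixed constants $\alpha$ and $\lambda$ and the uniform bound \eqref{sigmaadecuado} keep the underlying estimates index‑independent.
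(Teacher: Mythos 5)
Your proposal is correct and takes essentially the same route as the paper: the paper obtains Lemma \ref{fred} from Lemmas \ref{limitadocones12} and \ref{limitadocontract12} by exactly the transfer you describe, invoking $D_{0}\operatorname{exp}_{p}=\mathrm{Id}_{T_{p}M}$, the norm-preserving identifications $\tau_{p}$ (which carry the intrinsic cones onto the standard ones), and a further shrinking of the $\beta_{i}$. Your explicit bookkeeping of the two error terms --- the factor $D_{\textbf{\textit{g}}(p)}(\operatorname{exp}_{f(p)}^{-1})$ and the frame mismatch between $f(p)$ and $\textbf{\textit{g}}(p)$, both controlled uniformly by compactness of each $M_{i}$ and the index-independent margin coming from \eqref{sigmaadecuado} --- simply spells out what the paper compresses into ``by choosing $\beta_{i}$ even small[er], if necessary.''
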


\section{Openness of the  Anosov Families}
 
A well-known fact is that the set consisting of  Anosov diffeomorphisms     on a compact Riemannian manifold is open (see, for example, \cite{Shub}). The purpose of this section is to show the result analogous to Anosov families, that is, we prove that $\mathcal{A}(\textbf{M})$ is an open subset of  $\mathcal{F}(\textbf{M})$.   As we have seen in Section 3, the set consisting of   constant families associated to Anosov diffeomorphisms of class   $C^{2} $  is open in    $\mathcal{F}(\textbf{M})$. On the other hand, let $X$ be a compact metric space,  $\phi:X\rightarrow X$ a homeomorphism and $A:X\rightarrow SL(\mathbb{Z},d)$ a continuous map  such that the linear cocycle $F$ defined by $A$ over $\phi$ is hyperbolic. Thus, there exists  $\varepsilon >0$ such that, if     $B:X\rightarrow SL(\mathbb{Z},d)$ is continuous and $\Vert A(x)-B(x)\Vert <\varepsilon$ for all $x\in X$, then the  linear  cocycle $G$ defined by $B$ over $\phi$ is  hyperbolic (see   \cite{Viana}). This fact shows the stability of Anosov families that are obtained by  hyperbolic cocycles.  These are particular cases of our result. 
 
\medskip

First we   prove    the set consisting of Anosov families satisfying the  property of the angles is open and in the end of this work we will show the general case. We will consider $(\varepsilon_{i})_{i\in\mathbb{Z}}$ as in Lemma \ref{fred} and  fix $\textbf{\textit{g}}\in B(\textbf{\textit{f}},(\varepsilon_{i})_{i\in\mathbb{Z}})$.

\begin{lem}\label{invariantesespacios} 
For each  $p\in \textbf{M}$, take
 \begin{equation}\label{familiasespacios} F^{s}_{p}=\bigcap_{n=0}^{\infty}D_{\textbf{\textit{g}}^{n}(p)}\textbf{\textit{g}}^{-n}  (\overline{K_{\alpha,\textbf{\textit{f}},\textbf{\textit{g}}^{n}(p)}^{s}})\quad\text{ and }\quad F^{u}_{p}=\bigcap_{n=0}^{\infty}D_{\textbf{\textit{g}}^{-n}(p)}\textbf{\textit{g}}^{n} (\overline{K_{\alpha,\textbf{\textit{f}},\textbf{\textit{g}}^{-n}(p)}^{u}}).\end{equation}
 Thus, the families  $F^{s}_{p}$ and $F^{u}_{p}$ are $D\textbf{\textit{g}}$-invariant. 
 (see Figure \ref{intercone}).   
\begin{figure}[ht] 
\begin{center}

\begin{tikzpicture}
\draw[black!7,fill=black!10, ultra thin] (-7.9,-0.4)rectangle (-4.1,3.4);
\draw[black, fill=black!30, thin] (-6,1.5) -- (-7,3.4) -- (-5,3.4) -- cycle;
\draw[black, fill=black!50, thin] (-6,1.5) -- (-6.8,3.4) -- (-5.4,3.4) -- cycle;
\draw[black, fill=black!69, thin] (-6,1.5) -- (-6.6,3.4) -- (-5.7,3.4) -- cycle;
\draw[black, fill=black!85, thin] (-6,1.5) -- (-6.4,3.4) -- (-5.9,3.4) -- cycle;
\draw[black, fill=black!30, thin] (-6,1.5) -- (-7,-0.4) -- (-5,-0.4) -- cycle;
\draw[black, fill=black!50, thin] (-6,1.5) -- (-5.2,-0.4) -- (-6.6,-0.4) -- cycle;
\draw[black, fill=black!69, thin] (-6,1.5) -- (-6.3,-0.4) -- (-5.4,-0.4) -- cycle;
\draw[black, fill=black!85, thin] (-6,1.5) -- (-6.1,-0.4) -- (-5.6,-0.4) -- cycle;

\draw[black, fill=black!30, thin] (-6,1.5) -- (-4.1,2.5) -- (-4.1,0.5) -- cycle;
\draw[black, fill=black!50, thin] (-6,1.5) -- (-4.1,2.2) -- (-4.1,0.8) -- cycle;
\draw[black, fill=black!69, thin] (-6,1.5) -- (-4.1,2) -- (-4.1,1) -- cycle;
\draw[black, fill=black!85, thin] (-6,1.5) -- (-4.1,1.9) -- (-4.1,1.3) -- cycle;

\draw[black, fill=black!30, thin] (-6,1.5) -- (-7.9,2.5) -- (-7.9,0.5) -- cycle;
\draw[black, fill=black!50, thin] (-6,1.5) -- (-7.9,2.2) -- (-7.9,0.8) -- cycle;
\draw[black, fill=black!69, thin] (-6,1.5) -- (-7.9,2) -- (-7.9,1) -- cycle;
\draw[black, fill=black!85, thin] (-6,1.5) -- (-7.9,1.7) -- (-7.9,1.1) -- cycle;

 \draw[<->] (-6,-0.5) -- (-6,3.5);
 \draw[<->] (-8,1.5) -- (-4,1.5); 
\draw (-6.2,4) node[below] {\quad{\small $E_{p}^{u}$}}; 
\draw (-3.9,1.7) node[below] {\quad{\small $E_{p}^{s}$}};
\draw (-4.6,3.3) node[below] {\small $T_{p}M$};
\draw (-5.4,-0.5) node[below] {\small $F_{p,3}^{u}$};
 \draw[->] (-5.5,-0.6) -- (-5.7,-0.45);
\draw (-6,-0.5) node[below] {\small $F_{p,2}^{u}$};
\draw[->] (-6.2,-0.6) -- (-6.2,-0.45);
\draw (-6.6,-0.5) node[below] {\small $F_{p,1}^{u}$};
\draw[->] (-6.5,-0.6) -- (-6.5,-0.45);
\draw (-8.4,1.6) node[below] {\small $F_{p,3}^{s}$}; 
\draw[->] (-8.1,2.1) -- (-7.95,2.1);
\draw (-8.4,2) node[below] {\small $F_{p,2}^{s}$};
\draw[->] (-8.1,1.8) -- (-7.95,1.8);
\draw (-8.4,2.4) node[below] {\small $F_{p,1}^{s}$};
\draw[->] (-8.1,1.3) -- (-7.95,1.3);

\draw (-5,4) node[below] {\small $K_{\alpha,\textbf{\textit{f}},p}^{u}$};
\draw (-3.8,2.3) node[below] {\quad{\small $K_{\alpha,\textbf{\textit{f}},p}^{s}$}};
 \end{tikzpicture} 

\end{center}
\caption{$F_{p,n}^{r}=\bigcap_{k=1}^{n}D \textbf{\textit{g}}^{\pm k}_{\textbf{\textit{g}}^{\pm k}(p)}  (\overline{K_{\alpha,\textbf{\textit{f}},\textbf{\textit{g}}^{\pm k}(p)}^{s}})$, for $r=s,u$ and $n=1,2,3$.} \label{intercone}
\end{figure}
\end{lem}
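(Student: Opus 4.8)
The plan is to establish, for every $p\in\textbf{M}$, the two equalities $D_p\textbf{\textit{g}}(F^s_p)=F^s_{\textbf{\textit{g}}(p)}$ and $D_p\textbf{\textit{g}}(F^u_p)=F^u_{\textbf{\textit{g}}(p)}$, which is what $D\textbf{\textit{g}}$-invariance of the families means; I would treat the stable case in full and note that the unstable one is entirely symmetric. The argument rests on two purely formal facts together with one genuine input. The formal facts are that $D_p\textbf{\textit{g}}$ is a linear isomorphism of $T_p\textbf{M}$ onto $T_{\textbf{\textit{g}}(p)}\textbf{M}$, hence commutes with arbitrary intersections of subsets, and that the chain rule yields the cocycle identity $D_p\textbf{\textit{g}}\circ D_{\textbf{\textit{g}}^n(p)}\textbf{\textit{g}}^{-n}=D_{\textbf{\textit{g}}^n(p)}\textbf{\textit{g}}^{-(n-1)}$, since $\textbf{\textit{g}}\circ\textbf{\textit{g}}^{-n}=\textbf{\textit{g}}^{-(n-1)}$ near the relevant points. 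The only real ingredient is the cone inclusion provided by Lemma \ref{fred}.

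First I would push $D_p\textbf{\textit{g}}$ inside the intersection in \eqref{familiasespacios} and apply the cocycle identity factor by factor, obtaining
\[D_p\textbf{\textit{g}}(F^s_p)=\bigcap_{n=0}^{\infty}D_{\textbf{\textit{g}}^n(p)}\textbf{\textit{g}}^{-(n-1)}\big(\overline{K^s_{\alpha,\textbf{\textit{f}},\textbf{\textit{g}}^n(p)}}\big).\]
Writing $q=\textbf{\textit{g}}(p)$ and reindexing by $m=n-1$, and using $\textbf{\textit{g}}^n(p)=\textbf{\textit{g}}^m(q)$, the terms with $n\ge 1$ become exactly $D_{\textbf{\textit{g}}^m(q)}\textbf{\textit{g}}^{-m}(\overline{K^s_{\alpha,\textbf{\textit{f}},\textbf{\textit{g}}^m(q)}})$ for $m\ge 0$, which are precisely the factors defining $F^s_q$. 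Thus $D_p\textbf{\textit{g}}(F^s_p)$ equals $F^s_q$ intersected with the single leftover $n=0$ term $D_p\textbf{\textit{g}}(\overline{K^s_{\alpha,\textbf{\textit{f}},p}})$.

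Then I would show this leftover term is redundant. By Lemma \ref{fred}(ii), $D_q\textbf{\textit{g}}^{-1}(\overline{K^s_{\alpha,\textbf{\textit{f}},q}})\subseteq\overline{K^s_{\alpha,\textbf{\textit{f}},p}}$, and applying the isomorphism $D_p\textbf{\textit{g}}=(D_q\textbf{\textit{g}}^{-1})^{-1}$ gives $\overline{K^s_{\alpha,\textbf{\textit{f}},q}}\subseteq D_p\textbf{\textit{g}}(\overline{K^s_{\alpha,\textbf{\textit{f}},p}})$. Since $F^s_q\subseteq\overline{K^s_{\alpha,\textbf{\textit{f}},q}}$ (the $m=0$ factor), it follows that $F^s_q\subseteq D_p\textbf{\textit{g}}(\overline{K^s_{\alpha,\textbf{\textit{f}},p}})$, so intersecting $F^s_q$ with the leftover term changes nothing and $D_p\textbf{\textit{g}}(F^s_p)=F^s_{\textbf{\textit{g}}(p)}$. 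The unstable case runs identically using Lemma \ref{fred}(i) and $\textbf{\textit{g}}\circ\textbf{\textit{g}}^n=\textbf{\textit{g}}^{n+1}$; there the leftover $m=0$ factor $\overline{K^u_{\alpha,\textbf{\textit{f}},q}}$ already appears in the definition of $F^u_q$ and is absorbed for the same reason.

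I expect the main obstacle, indeed essentially the only delicate point, to be the bookkeeping of base points and indices in the non-autonomous composition law: checking that $D_{\textbf{\textit{g}}^n(p)}\textbf{\textit{g}}^{-n}$ really lands in $T_p\textbf{M}$, that the cocycle identity shifts the exponent by exactly one, and that the reindexing matches the factors of $F^s_{\textbf{\textit{g}}(p)}$ up to precisely one boundary term. Once this is set up, the absorption of the boundary term is immediate from Lemma \ref{fred}. As a byproduct of the same inclusion I would record that the intersections in \eqref{familiasespacios} are nested and decreasing, so that $F^s_p$ and $F^u_p$ are nonempty closed cones, although this is not needed for invariance itself.
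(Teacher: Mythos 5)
Your proposal is correct and takes essentially the same route as the paper's proof: both rest on the chain-rule cocycle identity together with the cone invariance of Lemmas \ref{limitadocones12}/\ref{fred}, which is exactly what absorbs the boundary cone term. The only cosmetic difference is that you obtain $D_{p}\textbf{\textit{g}}(F^{s}_{p})=F^{s}_{\textbf{\textit{g}}(p)}$ in a single reindexing computation with the leftover $n=0$ factor shown redundant, whereas the paper derives the same equality from the two separate inclusions $D_{\textbf{\textit{g}}(p)}\textbf{\textit{g}}^{-1}(F^{s}_{\textbf{\textit{g}}(p)})\subseteq F^{s}_{p}$ and $D_{p}\textbf{\textit{g}}(F^{s}_{p})\subseteq F^{s}_{\textbf{\textit{g}}(p)}$.
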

\begin{proof} By Lemma \ref{limitadocones12} we have  for all   $p\in \textbf{M}$,   
$D_{\textbf{\textit{g}}(p)}\textbf{\textit{g}}^{-1}(\overline{K_{\alpha,\textbf{\textit{f}},\textbf{\textit{g}}(p)}^{s}})
\subseteq K_{\alpha,\textbf{\textit{f}},p}^{s} $  and $ D_{p}g
(\overline{K_{\alpha,\textbf{\textit{f}},p}^{u}})
\subseteq K_{\alpha,\textbf{\textit{f}},\textbf{\textit{g}}(p)}^{u}
.$ 
Thus $D_{\textbf{\textit{g}}(p)}\textbf{\textit{g}}^{-1} (F^{s}_{\textbf{\textit{g}}(p)}) 
 \subseteq\bigcap_{n=0}^{\infty}
D_{\textbf{\textit{g}}^{n}(p)}\textbf{\textit{g}}^{-n}(\overline{K_{\alpha,\textbf{\textit{f}},\textbf{\textit{g}}^{n}(p)}^{s}})
 =F_{p}^{s}.$
On the other hand, \begin{align*}D_{p}\textbf{\textit{g}}(F^{s}_{p})&=
D_{p}\textbf{\textit{g}} (\overline{K_{\alpha,\textbf{\textit{f}},p}^{s}})
\cap\bigcap_{n=1}^{\infty}
D_{p}g  (D_{\textbf{\textit{g}}^{n}(p)}\textbf{\textit{g}}^{-n} (\overline{K_{\alpha,\textbf{\textit{f}},\textbf{\textit{g}}^{n}(p)}^{s}}))
\\
&\subseteq   \bigcap_{n=0}^{\infty}
D_{\textbf{\textit{g}}^{n+1}(p)}  \textbf{\textit{g}}^{-n} (\overline{K_{\alpha,\textbf{\textit{f}},\textbf{\textit{g}}^{n+1}(p)}^{s}})  =F^{s}_{\textbf{\textit{g}}(p)}.
\end{align*}
Consequently,  $D_{p}\textbf{\textit{g}} (F^{s}_{p})=F^{s}_{\textbf{\textit{g}}(p)}.$
Analogously we can prove     $D_{p}\textbf{\textit{g}}(F^{u}_{p})=F^{u}_{\textbf{\textit{g}}(p)}.$
\end{proof}

 Inductively we have    
$D_{p}\textbf{\textit{g}}^{n} (F^{s}_{p}) = F_{\textbf{\textit{g}}^{n}(p)}^{s} $ and $D_{p}\textbf{\textit{g}}^{n} (F^{u}_{p}) = F_{\textbf{\textit{g}}^{n}(p)}^{u},$   for all $n\geq1.$
 Since  $F_{p}^{r}\subseteq K^{r} _{\alpha,\textbf{\textit{f}},p}$ for  $r=s,u$,  it follows from  Lemma \ref{fred} that, for all $n\geq1$,   \[\Vert D_{p}\textbf{\textit{g}}^{n}v\Vert \geq \frac{1}{\eta^{n}}\Vert v\Vert\text{ for }v\in F^{u}_{p}\quad\text{and} \quad\Vert D_{p}\textbf{\textit{g}}^{-n} v\Vert \geq \frac{1}{\eta^{n}}\Vert v\Vert\text{ for } v\in F^{s}_{p}  .\]

\begin{lem}\label{ultimolema}   $F_{p}^{s}$ and  $F_{p}^{u}$  given in  \eqref{familiasespacios} are vectorial subspaces and  furthermore  $T_{p}\textbf{M}=F^{s}_{p}\oplus F^{u}_{p}$, for each $p\in\textbf{M}$.  
\end{lem}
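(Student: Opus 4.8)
The plan is to identify $F^{s}_{p}$ and $F^{u}_{p}$ as the (perturbed) stable and unstable subspaces of $\textbf{\textit{g}}$, and to promote them from cones to subspaces by a cone–shrinking argument. Unwinding \eqref{familiasespacios} and using $D_{\textbf{\textit{g}}^{n}(p)}\textbf{\textit{g}}^{-n}=(D_{p}\textbf{\textit{g}}^{n})^{-1}$, a vector $v\in T_{p}\textbf{M}$ lies in $F^{s}_{p}$ if and only if $D_{p}\textbf{\textit{g}}^{n}(v)\in\overline{K^{s}_{\alpha,\textbf{\textit{f}},\textbf{\textit{g}}^{n}(p)}}$ for every $n\ge 0$, and dually for $F^{u}_{p}$ with backward iterates. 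Each $F^{r}_{p}$ ($r=s,u$) is then a closed set invariant under scalar multiplication, so the whole content is to show these cones are linear and that their dimensions add to $d$. As a preliminary step I record $F^{s}_{p}\cap F^{u}_{p}=\{0\}$: a nonzero $v$ in the intersection would satisfy $\|D_{p}\textbf{\textit{g}}^{n}(v)\|\to 0$ (forward contraction on $F^{s}_{p}$) and $\|D_{p}\textbf{\textit{g}}^{n}(v)\|\to\infty$ (forward expansion on $F^{u}_{p}$), using the estimates displayed after Lemma \ref{invariantesespacios}.

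The heart of the matter is a shrinking estimate for the nested cones $C_{n}:=D_{\textbf{\textit{g}}^{n}(p)}\textbf{\textit{g}}^{-n}(\overline{K^{s}_{\alpha,\textbf{\textit{f}},\textbf{\textit{g}}^{n}(p)}})$, for which I would use two consequences of Lemma \ref{fred}. Since the stable cones are backward invariant, every $v\in C_{n}$ has all intermediate iterates $v,D_{p}\textbf{\textit{g}}(v),\dots,D_{p}\textbf{\textit{g}}^{n}(v)$ inside the stable cones, whence $\|D_{p}\textbf{\textit{g}}^{n}(v)\|\le\eta^{n}\|v\|$ by applying the contraction in Lemma \ref{fred}(ii) step by step; and the unstable cones are forward invariant and expanded. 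Now take $v_{1},v_{2}\in C_{n}$ with the same $E^{s}_{p}$–component, so $v_{1}-v_{2}\in E^{u}_{p}\subseteq\overline{K^{u}_{\alpha,\textbf{\textit{f}},p}}$; expansion on the unstable cone gives $\|v_{1}-v_{2}\|\le\eta^{n}\|D_{p}\textbf{\textit{g}}^{n}(v_{1}-v_{2})\|\le\eta^{n}(\|D_{p}\textbf{\textit{g}}^{n}v_{1}\|+\|D_{p}\textbf{\textit{g}}^{n}v_{2}\|)\le\eta^{2n}(\|v_{1}\|+\|v_{2}\|)$. Letting $n\to\infty$, any two elements of $F^{s}_{p}=\bigcap_{n}C_{n}$ sharing an $E^{s}_{p}$–component coincide, so the orthogonal projection $\pi^{s}\colon F^{s}_{p}\to E^{s}_{p}$ is injective; moreover $\pi^{s}(v)=0$ forces $v\in E^{u}_{p}\cap\overline{K^{s}_{\alpha,\textbf{\textit{f}},p}}=\{0\}$.

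It remains to prove $\pi^{s}$ is onto and $F^{s}_{p}$ is linear, and for this I would exhibit a genuine $k$–plane inside $F^{s}_{p}$ by a limit. Put $S_{n}:=D_{\textbf{\textit{g}}^{n}(p)}\textbf{\textit{g}}^{-n}(E^{s}_{\textbf{\textit{g}}^{n}(p)})$, the honest stable space of $\textbf{\textit{f}}$ at $\textbf{\textit{g}}^{n}(p)$ pulled back by $D\textbf{\textit{g}}^{-n}$. Since $E^{s}_{\textbf{\textit{g}}^{n}(p)}\subseteq\overline{K^{s}_{\alpha,\textbf{\textit{f}},\textbf{\textit{g}}^{n}(p)}}$, each $S_{n}$ is a $k$–dimensional subspace contained in $C_{n}\subseteq\overline{K^{s}_{\alpha,\textbf{\textit{f}},p}}$, hence the graph of a linear map $\phi_{n}\colon E^{s}_{p}\to E^{u}_{p}$ with $\|\phi_{n}\|\le\alpha$. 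Applying the shrinking estimate to points of $S_{n}$ and $S_{m}$ (with $m\ge n$) over a common $E^{s}_{p}$–component shows $\|\phi_{n}-\phi_{m}\|\le c\,\eta^{2n}$, so $(\phi_{n})$ is Cauchy and converges to some $\phi$ with $\|\phi\|\le\alpha$, whose graph $S_{\infty}$ is a $k$–dimensional subspace. Because $S_{m}\subseteq C_{n}$ for all $m\ge n$ and $C_{n}$ is closed, $S_{\infty}\subseteq\bigcap_{n}C_{n}=F^{s}_{p}$; since $S_{\infty}$ already projects onto all of $E^{s}_{p}$ while $\pi^{s}$ is injective on $F^{s}_{p}$, the two sets coincide. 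Thus $F^{s}_{p}=S_{\infty}$ has dimension $k$, and the symmetric construction (pulling back $E^{u}$ under forward iterates) gives that $F^{u}_{p}$ is a subspace of dimension $d-k$. Combining $\dim F^{s}_{p}+\dim F^{u}_{p}=k+(d-k)=d=\dim T_{p}\textbf{M}$ with $F^{s}_{p}\cap F^{u}_{p}=\{0\}$ yields $T_{p}\textbf{M}=F^{s}_{p}\oplus F^{u}_{p}$.

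The main obstacle, and the only place genuine work is required, is the surjectivity and linearity in the last paragraph: the intersection of non-convex cones is a priori only a cone, and one must rule out that it is a thin curved cone rather than a flat $k$–plane. The estimate $\|v_{1}-v_{2}\|\le\eta^{2n}(\|v_{1}\|+\|v_{2}\|)$ is what forces the collapse, and feeding it the concrete planes $S_{n}$ (which exist precisely because $\textbf{\textit{f}}$ is genuinely Anosov) upgrades the soft injectivity of $\pi^{s}$ into the hard statement that $F^{s}_{p}$ is a full graph. One should also verify the routine points that the inclusions and estimates of Lemma \ref{fred}, stated on the open cones, pass to their closures by continuity, and that $\eta<1$ so that every geometric factor above tends to $0$.
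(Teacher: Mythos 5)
Your proof is correct, but note that the paper does not actually prove this lemma at all: its entire ``proof'' is the citation ``See Proposition 7.3.3 in \cite{luisb}''. What you have produced is a complete, self-contained reconstruction of the standard cone-criterion argument that lies behind that citation, and every step checks out against the tools the paper provides: the trivial intersection $F^{s}_{p}\cap F^{u}_{p}=\{0\}$ from the opposing growth rates displayed after Lemma \ref{invariantesespacios}; the collapse estimate $\Vert v_{1}-v_{2}\Vert\leq\eta^{2n}(\Vert v_{1}\Vert+\Vert v_{2}\Vert)$ for $v_{1},v_{2}\in C_{n}$ with equal $E^{s}_{p}$-component, obtained by combining forward contraction along the pulled-back stable cones (legitimate, since backward invariance in Lemma \ref{fred}(ii) forces all intermediate iterates into the stable cones) with forward expansion on $E^{u}_{p}\subseteq\overline{K^{u}_{\alpha,\textbf{\textit{f}},p}}$; and the identification of $F^{s}_{p}$ with the limit of the genuine $k$-planes $S_{n}=D_{\textbf{\textit{g}}^{n}(p)}\textbf{\textit{g}}^{-n}(E^{s}_{\textbf{\textit{g}}^{n}(p)})$, which is exactly the right device to rule out your ``thin curved cone'' scenario and simultaneously yields $\dim F^{s}_{p}=\dim E^{s}_{p}$ — information the bare citation obscures and which Proposition \ref{propoaberang} implicitly uses (via $F^{r}_{p}\subseteq\overline{K^{r}_{\alpha,\textbf{\textit{f}},p}}$) to get the property of the angles for $\textbf{\textit{g}}$. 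Three small housekeeping remarks: your closure caveat at the end is already discharged by the paper, since Lemmas \ref{limitadocones12} and \ref{limitadocontract12} are stated for the closed cones $\overline{K^{s}_{\alpha}}$ and $\overline{K^{u}_{\alpha}}$; your Cauchy estimate quietly uses the nesting $S_{m}\subseteq C_{m}\subseteq C_{n}$ for $m\geq n$, which deserves the one-line justification that it follows from the backward invariance $D_{\textbf{\textit{g}}(p)}\textbf{\textit{g}}^{-1}(\overline{K^{s}_{\alpha,\textbf{\textit{f}},\textbf{\textit{g}}(p)}})\subseteq\overline{K^{s}_{\alpha,\textbf{\textit{f}},p}}$; and your projection $\pi^{s}$ should be read as the projection along $E^{u}_{p}$, which is orthogonal in the adapted norm $\Vert\cdot\Vert_{1}$ that Section 4 has already fixed — the argument is insensitive to this choice in any case.
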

\begin{proof} See    Proposition 7.3.3 in \cite{luisb}.
\end{proof}

\begin{propo}\label{propoaberang}
   \textbf{\textit{g}} is an    Anosov family   and satisfies the property of the angles.
\end{propo}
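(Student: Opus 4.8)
The plan is to use the $D\textbf{\textit{g}}$-invariant sections $F^{s}_{p}$ and $F^{u}_{p}$ built in \eqref{familiasespacios} as the stable and unstable subspaces of $\textbf{\textit{g}}$, and then to verify the two clauses of Definition \ref{anosovfamily} together with the property of the angles. First I would collect what the preceding lemmas already provide: Lemma \ref{ultimolema} gives $T_{p}\textbf{M}=F^{s}_{p}\oplus F^{u}_{p}$ at every $p$, so $F^{s}$ and $F^{u}$ are complementary subbundles, while Lemma \ref{invariantesespacios}, together with its inductive consequence $D_{p}\textbf{\textit{g}}^{\,n}(F^{r}_{p})=F^{r}_{\textbf{\textit{g}}^{\,n}(p)}$ for $r=s,u$, shows this splitting is $D\textbf{\textit{g}}$-invariant. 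This disposes of clause i.

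For clause ii I would turn the one-sided expansion bounds recorded just after Lemma \ref{invariantesespacios}, namely $\Vert D_{p}\textbf{\textit{g}}^{\,n}v\Vert\ge\eta^{-n}\Vert v\Vert$ for $v\in F^{u}_{p}$ and $\Vert D_{p}\textbf{\textit{g}}^{-n}v\Vert\ge\eta^{-n}\Vert v\Vert$ for $v\in F^{s}_{p}$, into the contraction bounds demanded by Definition \ref{anosovfamily}, by restricting $D_{p}\textbf{\textit{g}}^{\,n}$ to the invariant subspaces and inverting. For instance, given $v\in F^{s}_{p}$ put $u=D_{p}\textbf{\textit{g}}^{\,n}v\in F^{s}_{\textbf{\textit{g}}^{\,n}(p)}$; since $v=D_{\textbf{\textit{g}}^{\,n}(p)}\textbf{\textit{g}}^{-n}u$ and $u$ lies in $F^{s}_{\textbf{\textit{g}}^{\,n}(p)}$, the backward expansion estimate gives $\Vert v\Vert\ge\eta^{-n}\Vert u\Vert$, hence $\Vert D_{p}\textbf{\textit{g}}^{\,n}v\Vert\le\eta^{\,n}\Vert v\Vert$; the symmetric argument on $F^{u}_{p}$ yields $\Vert D_{p}\textbf{\textit{g}}^{-n}v\Vert\le\eta^{\,n}\Vert v\Vert$. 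Thus clause ii holds with $\lambda^{\prime}=\eta\in(0,1)$ and $c=1$, so $\textbf{\textit{g}}$ is even strictly Anosov. Continuity of $p\mapsto F^{s}_{p},F^{u}_{p}$ then follows directly from Proposition \ref{continuidade}, applied to $\textbf{\textit{g}}$ with this invariant splitting, so no separate continuity argument is required and $\textbf{\textit{g}}$ is an Anosov family.

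The remaining and, I expect, most delicate point is the property of the angles, because what must be preserved is the uniformity in the component index $i$, which is exactly what the later inductive scheme of the paper relies on. The key observation is that by construction $F^{s}_{p}\subseteq K^{s}_{\alpha,\textbf{\textit{f}},p}$ and $F^{u}_{p}\subseteq K^{u}_{\alpha,\textbf{\textit{f}},p}$, and under the isometry $\tau_{p}$ these cones become the fixed Euclidean cones $K^{s}_{\alpha}$ and $K^{u}_{\alpha}$ whose aperture $\alpha$ is a single constant independent of $p$ and $i$. A short computation in $\mathbb{R}^{k}\oplus\mathbb{R}^{d-k}$ shows that unit vectors $v_{s}\in\overline{K^{s}_{\alpha}}$ and $v_{u}\in\overline{K^{u}_{\alpha}}$ satisfy $\vert\cos(\widehat{v_{s}v_{u}})\vert\le\tfrac{2\alpha}{1+\alpha^{2}}$, and since $\alpha\in(0,\tfrac{1-\lambda}{1+\lambda})\subseteq(0,1)$ we have $\tfrac{2\alpha}{1+\alpha^{2}}<1$. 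Because $\tau_{p}$ preserves the norm $\Vert\cdot\Vert_{1}$, and hence angles, setting $\mu=\tfrac{(1-\alpha)^{2}}{1+\alpha^{2}}\in(0,1)$ gives $\cos(\widehat{v_{s}v_{u}})\in[\mu-1,1-\mu]$ for every $p\in\textbf{M}$, which is precisely Definition \ref{propang2}. Assembling the three steps shows that $\textbf{\textit{g}}$ is an Anosov family satisfying the property of the angles.
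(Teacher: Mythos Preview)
Your proposal is correct and follows essentially the same route as the paper: assemble the splitting from Lemmas \ref{invariantesespacios} and \ref{ultimolema}, read off the hyperbolic estimates from the cone inequalities established after Lemma \ref{invariantesespacios} (via Lemma \ref{fred}), invoke Proposition \ref{continuidade} for continuity, and deduce the property of the angles from the containments $F^{s}_{p}\subseteq K^{s}_{\alpha,\textbf{\textit{f}},p}$, $F^{u}_{p}\subseteq K^{u}_{\alpha,\textbf{\textit{f}},p}$ together with $\alpha<1$. The only difference is that you spell out explicitly the inversion of the expansion bounds into contraction bounds and the Euclidean cone computation $|\cos(\widehat{v_{s}v_{u}})|\le\tfrac{2\alpha}{1+\alpha^{2}}$, details the paper leaves implicit.
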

\begin{proof} From Lemmas \ref{fred}, \ref{invariantesespacios} and \ref{ultimolema}  we have that, considering the splitting  $T_{p}\textbf{M}=F^{s}_{p}\oplus F^{u}_{p}$, for each $p\in\textbf{M}$, \textbf{\textit{g}} has  hyperbolic behavior.  We  can prove that this splitting is unique (see Lemma \ref{unicidadesubs}) and depends continuously on $p$ (see Proposition \ref{continuidade}).  Consequently,  \textbf{\textit{g}} is an Anosov family. Finally, since  
\( F^{s}_{p}\subseteq K^{s}_{\alpha,\textbf{\textit{f}},p}\)   and \( F^{u}_{p}\subseteq K^{u}_{\alpha,\textbf{\textit{f}},p}\) for all $p$ and $ \alpha <\frac{1- \lambda}{1+ \lambda}<1,$ we have that $\textbf{\textit{g}} $   s.   p.   a.  
\end{proof}

From    Proposition \ref{propoaberang} we obtain the set consisting of Anosov families   that  s.   p. a. is open in  $\mathcal{F}(\textbf{M})$.  Finally will show that the set consisting of all the Anosov families is open in  $\mathcal{F}(\textbf{M}).$   In order to prove this result, let's see the following facts: suppose that   $(\textbf{M},\langle\cdot,\cdot\rangle, \textbf{\textit{f}})$   does not s. p. a. with the  Riemannian metric  $\langle\cdot,\cdot\rangle$. 
Thus $(\textbf{M},\langle\cdot,\cdot\rangle^{\star}, \textbf{\textit{f}})$ is a strictly   Anosov family  that  s.   p.   a.  with the   Riemannian metric 
$\langle\cdot,\cdot\rangle^{\star}$ obtained  in   Proposition   \ref{mather}.   
Fix $\varepsilon >0$ and take $\Delta_{i}=\frac{1}{\mu_{i}}(\frac{\lambda+\varepsilon}{\varepsilon}c)^{2}$
 (see   \eqref{norma231}).
 Thus,  \[ \Delta_{i}^{-1}\Vert v\Vert^{\star}\leq  \Vert v\Vert \leq 2\Vert v\Vert ^{\star}\quad\text{  for all }v\in TM_{i}, i\in\mathbb{Z},\] 
where $\Vert \cdot\Vert $ and  $\Vert \cdot\Vert ^{\star}$ are the norms induced by  $\langle\cdot,\cdot\rangle$ and $\langle\cdot,\cdot\rangle^{\star}$ on $\textbf{M}$, respectively.
  From   Proposition \ref{propoaberang} it follows that there exists  a sequence  $(\varepsilon_{i})_{i\in\mathbb{Z}}$ such that, if  $\textbf{\textit{g}}=(g_{i})_{i\in\mathbb{Z}}$ is a  non-stationary dynamical system  with $d_{\textbf{D}_{i}}^{\star}(f_{i},g_{i})<\varepsilon_{i}$, then $(\textbf{M},\langle\cdot,\cdot\rangle^{\star},\textbf{\textit{g}})$ is an    Anosov family, where $d_{\textbf{D}_{i}}^{\star}$ is the metric  on   $\textbf{D}_{i}$ induced  by the  metric $\langle\cdot,\cdot\rangle^{\star}$ on $\textbf{M}$. We want to show that each  family in some strong basic neighborhood of \textbf{\textit{f}}    is an Anosov family with the metric $\langle\cdot,\cdot\rangle$.  This fact is not  immediate, since $\langle\cdot,\cdot\rangle$ and $\langle\cdot,\cdot\rangle^{\star}$ are not necessarily uniformly equivalent on $\textbf{M}$ and the notion of Anosov family depends on the metric on the total space.    
  
\begin{teo}\label{teoremaprin}    $\mathcal{A}(\textbf{M})$ is open in   
$\mathcal{F}(\textbf{M})$. 
\end{teo}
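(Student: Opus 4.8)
The plan is to split according to whether $\textbf{\textit{f}}$ satisfies the property of the angles. If $\textbf{\textit{f}}$ s.\,p.\,a., Corollary~\ref{mather2} gives a metric uniformly equivalent to $\langle\cdot,\cdot\rangle$ on all of $\textbf{M}$ for which $\textbf{\textit{f}}$ is strictly Anosov and s.\,p.\,a., and openness follows at once from Proposition~\ref{propoaberang} together with Proposition~\ref{normasequiv} (the Anosov condition being insensitive to globally uniformly equivalent metrics). So I may assume $\textbf{\textit{f}}$ does not s.\,p.\,a.; this is where all the difficulty sits, because the metric $\langle\cdot,\cdot\rangle^\star$ of Proposition~\ref{mather}, while making $\textbf{\textit{f}}$ strictly Anosov and s.\,p.\,a., is only uniformly equivalent to $\langle\cdot,\cdot\rangle$ on each $M_i$, not on $\textbf{M}$.

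First I would fix the neighborhood. Proposition~\ref{propoaberang}, applied with respect to $\langle\cdot,\cdot\rangle^\star$, provides $(\varepsilon_i)_{i\in\mathbb{Z}}$ such that $d_{\textbf{D}_i}^\star(f_i,g_i)<\varepsilon_i$ for all $i$ forces $(\textbf{M},\langle\cdot,\cdot\rangle^\star,\textbf{\textit{g}})$ to be Anosov. Since $\langle\cdot,\cdot\rangle$ and $\langle\cdot,\cdot\rangle^\star$ are uniformly equivalent on each compact $M_i$, the $C^1$-metrics $d_{\textbf{D}_i}$ and $d_{\textbf{D}_i}^\star$ are equivalent for every fixed $i$; as the strong topology allows a separate radius at each index, I pick $\varepsilon'_i>0$ with $d_{\textbf{D}_i}(f_i,g_i)<\varepsilon'_i\Rightarrow d_{\textbf{D}_i}^\star(f_i,g_i)<\varepsilon_i$. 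Then every $\textbf{\textit{g}}\in B(\textbf{\textit{f}},(\varepsilon'_i)_{i\in\mathbb{Z}})$ is Anosov for $\langle\cdot,\cdot\rangle^\star$, and it remains only to upgrade this to the Anosov condition for $\langle\cdot,\cdot\rangle$ with constants independent of $i$.

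Fix such a $\textbf{\textit{g}}$ and let $T_p\textbf{M}=F^s_p\oplus F^u_p$ be the splitting of Lemma~\ref{ultimolema}: it is continuous, $D\textbf{\textit{g}}$-invariant, contained in the cones $F^s_p\subseteq K^s_{\alpha,\textbf{\textit{f}},p}$, $F^u_p\subseteq K^u_{\alpha,\textbf{\textit{f}},p}$, and, dualizing the expansion estimates obtained from Lemma~\ref{fred}, it satisfies $\|D_p\textbf{\textit{g}}^{\,n}v\|^\star\le\eta^n\|v\|^\star$ for $v\in F^s_p$ and $\|D_p\textbf{\textit{g}}^{\,-n}v\|^\star\le\eta^n\|v\|^\star$ for $v\in F^u_p$, $n\ge1$, with $\eta<1$. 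I keep this very splitting for $\langle\cdot,\cdot\rangle$, since subspaces and their continuity in $p$ are metric-free; only the hyperbolic inequalities must be re-expressed in $\|\cdot\|$. The main obstacle is exactly that $\|\cdot\|$ and $\|\cdot\|^\star$ are \emph{not} uniformly equivalent on $\textbf{M}$; what rescues the argument is that they \emph{are} uniformly equivalent on the cones $K^s_{\alpha,\textbf{\textit{f}},p}$ and $K^u_{\alpha,\textbf{\textit{f}},p}$, where all the relevant vectors live. Indeed, for $v=v_s+v_u\in K^s_{\alpha,\textbf{\textit{f}},p}$ (with $\|v_u\|\le\alpha\|v_s\|$) the triangle inequality alone yields $(1-\alpha)\|v_s\|\le\|v\|\le(1+\alpha)\|v_s\|$, no angle entering; on $E^s_p$ and $E^u_p$ separately the bounds \eqref{qwerty} compare $\|\cdot\|$ and $\|\cdot\|^\star$ with constants free of $i$; and the uniform angle $\mu^\star$ from the s.\,p.\,a. of $\langle\cdot,\cdot\rangle^\star$ makes $\|v\|^\star$ comparable to $((\|v_s\|^\star)^2+(\|v_u\|^\star)^2)^{1/2}$, hence to $\|v_s\|$, again uniformly. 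Combining, there is $L\ge1$ independent of $i$ and $p$ with $L^{-1}\|v\|\le\|v\|^\star\le L\|v\|$ for all $v\in K^s_{\alpha,\textbf{\textit{f}},p}\cup K^u_{\alpha,\textbf{\textit{f}},p}$.

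Finally I would convert the estimates. By $D\textbf{\textit{g}}$-invariance of $F^s$ and $F^u$, the iterates $D_p\textbf{\textit{g}}^{\,n}v$ (for $v\in F^s_p$) and $D_p\textbf{\textit{g}}^{\,-n}v$ (for $v\in F^u_p$) remain in the stable, resp.\ unstable, cones, so the cone comparison applies at the base point and at every image point: $\|D_p\textbf{\textit{g}}^{\,n}v\|\le L\|D_p\textbf{\textit{g}}^{\,n}v\|^\star\le L\eta^n\|v\|^\star\le L^2\eta^n\|v\|$ for $v\in F^s_p$, and symmetrically $\|D_p\textbf{\textit{g}}^{\,-n}v\|\le L^2\eta^n\|v\|$ for $v\in F^u_p$, with the single constant $c=L^2$ and ratio $\eta<1$ independent of $i,p,n$. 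Together with the continuity and invariance of $F^s\oplus F^u$, this is precisely Definition~\ref{anosovfamily}, so $\textbf{\textit{g}}$ is an Anosov family for $\langle\cdot,\cdot\rangle$; hence $B(\textbf{\textit{f}},(\varepsilon'_i)_{i\in\mathbb{Z}})\subseteq\mathcal{A}(\textbf{M})$, and $\mathcal{A}(\textbf{M})$ is open.
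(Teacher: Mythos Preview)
Your proof is correct and follows the same route as the paper's: split on whether $\textbf{\textit{f}}$ s.\,p.\,a., pass to the Mather metric $\langle\cdot,\cdot\rangle^{\star}$ of Proposition~\ref{mather}, obtain the Anosov splitting for $\textbf{\textit{g}}$ there via Proposition~\ref{propoaberang}, and then transfer the hyperbolic estimates back to $\langle\cdot,\cdot\rangle$ by exploiting that $\Vert\cdot\Vert$ and $\Vert\cdot\Vert^{\star}$ are uniformly comparable on the cones (the paper performs this last step as an explicit chain of inequalities rather than isolating your ``uniform equivalence on cones'' as a separate lemma, but the content is identical). One small imprecision: since Proposition~\ref{propoaberang} is being applied in the $\star$ metric, the cone containment reads $\Vert v_u\Vert^{\star}\le\alpha\Vert v_s\Vert^{\star}$ rather than $\Vert v_u\Vert\le\alpha\Vert v_s\Vert$, and converting via \eqref{qwerty} gives only $\Vert v_u\Vert\le \alpha\tfrac{\lambda+\varepsilon}{\varepsilon}c\,\Vert v_s\Vert$; this is why the paper imposes the extra constraint $\alpha<\varepsilon/(c(\lambda+\varepsilon))$ to make $(1-\alpha\tfrac{\lambda+\varepsilon}{\varepsilon}c)\Vert v_s\Vert\le\Vert v\Vert$ nontrivial, a point you should add to your argument.
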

\begin{proof} 
 If \textbf{\textit{f}} satisfies the property of the  angles, by Proposition \ref{propoaberang}   there exists a strong basic neighborhood $B(\textbf{\textit{f}},( \varepsilon_{i})_{i\in\mathbb{Z}})$ of \textbf{\textit{f}} such that, if  $\textbf{\textit{g}}\in B(\textbf{\textit{f}},(  \varepsilon _{i})_{i\in\mathbb{Z}})$   then \textbf{\textit{g}} is  an   Anosov family.    Suppose that  \textbf{\textit{f}} does not  satisfy the property of the angles. From  Proposition  \ref{propoaberang}  we have there exists a  sequence of   positive   numbers  $ (\varepsilon_{i})_{i\in\mathbb{Z}}$ such that, if  $\textbf{\textit{g}}=(g_{i})_{i\in\mathbb{Z}}\in \mathcal{F}(\textbf{M})$  and  $d_{\textbf{D}_{i}}^{\star}(f_{i},g_{i})<\varepsilon_{i}$, then $(\textbf{M},\langle\cdot,\cdot\rangle^{\star},\textbf{\textit{g}})$ is  a  strictly Anosov family with  constant $\tilde{\lambda}=\eta \in(0,1)$. For each $i$, take   $\tilde{\varepsilon}_{i} =\varepsilon_{i}/\Delta_{i}.$  Notice that if      $d_{\textbf{D}_{i}}(f_{i},g_{i})<\tilde{\varepsilon}_{i} $ then  $d_{\textbf{D}_{i}^{\star}}(f_{i},g_{i})<\varepsilon_{i}$, for all $i$. Consequently, if $\textbf{\textit{g}}\in B(\textbf{\textit{f}},(\tilde{\varepsilon}_{i})_{i\in\mathbb{Z}})$, then $(\textbf{M},\langle\cdot,\cdot\rangle^{\star},\textbf{\textit{g}})$ is an  Anosov family. Consider the stable subspace   $E^{s}_{\textbf{\textit{g}},p} $ of  \textbf{\textit{g}} at $p$ (with respect to  the metric $\langle\cdot,\cdot\rangle^{\star}$). If $v\in E^{s}_{\textbf{\textit{g}},p} $, then  $v=v_{s}+v_{u}$, where  $v_{s}\in E^{s}_{\textbf{\textit{f}},p}$ and  $v_{u}\in E^{u}_{\textbf{\textit{f}},p}$.
  Take      $\alpha\in (0,N)$, where $N=\min\{\frac{\varepsilon}{c(\lambda+\varepsilon)},\frac{1-\lambda}{1+\lambda}\}$.   Since the stable    subspaces    of  \textbf{\textit{g}} are contained in the  stable  $\alpha$-cones of \textbf{\textit{f}} and $\Vert v_{s}\Vert \leq \Vert v_{s}\Vert^{\star}$, it follows from     \eqref{qwerty} that 
\[\Vert v_{s}\Vert \leq  \Vert v_{s}+v_{u}\Vert + \Vert v_{u}\Vert \leq \Vert v_{s}+v_{u}\Vert +\alpha\Vert v_{s}\Vert^{\star}\leq \Vert v \Vert+\alpha \frac{\lambda+\varepsilon}{\varepsilon}c\Vert v_{s}\Vert.\]
   Thus $(1- \alpha \frac{\lambda+\varepsilon}{\varepsilon}c)\Vert v_{s}\Vert \leq \Vert v\Vert $ (note that $1- \alpha \frac{\lambda+\varepsilon}{\varepsilon}c>0$ because $\alpha<\frac{\varepsilon}{c(\lambda+\varepsilon)}$).  Hence
\begin{align*}\Vert D_{p}g^{n} (v)\Vert &\leq 2\Vert D_{p}g^{n} (v)\Vert^{\star} \leq 2\eta^{n}(\Vert v_{s}\Vert^{\star} +\Vert v_{u}\Vert^{\star}) 
  \leq   2\eta^{n}(1+\alpha)\Vert v_{s}\Vert^{\star}\\
& \leq 2\eta^{n}(1+\alpha)\frac{\lambda+\varepsilon}{\varepsilon}c (1-\alpha \frac{\lambda+\varepsilon}{\varepsilon}c)^{-1}\Vert v\Vert= c^{\prime}\eta^{n}\Vert v\Vert,
\end{align*}
where $c^{\prime}=2(1+\alpha)\frac{\lambda+\varepsilon}{\varepsilon}c (1-\alpha \frac{\lambda+\varepsilon}{\varepsilon}c)^{-1}.$ Analogously we have     $\Vert D_{p}\textbf{\textit{g}}^{-n} (v)\Vert\leq c^{\prime}\eta^{n}\Vert v\Vert$ for $v\in E^{u}_{\textbf{\textit{g}},p} $. 
Hence,   $(\textbf{M},\langle\cdot,\cdot\rangle, \textbf{\textit{g}})$ is an Anosov family with  constants    $\eta $ and $c^{\prime}$.  
 \end{proof}

 Note that for the basic strong neighborhoods   $B(\textbf{\textit{f}},(\varepsilon_{i})_{i\in\mathbb{Z}})$ of a  system $(f_{i})_{i\in\mathbb{Z}}$ the  $\varepsilon_{i}$ can be arbitrarilly small for   $|i|$ large.  When there exists   $\varepsilon >0$ such that  $\varepsilon_{i}=\varepsilon$ for all $i\in\mathbb{Z}$,   the neighborhood is called \textit{uniform}. As noted above, when    \textbf{\textit{f}} is the constant family     associated to an Anosov diffeomorphism,   it is possible to find  a   uniform neighborhood of   \textbf{\textit{f}} whose elements are  Anosov families.  In general it is not possible to find  a   uniform neighborhood of an Anosov family such that each family in that neighborhood is Anosov. For example,    if the angles between the stable   and unstable subspace   decay, or if we can not get the inequality \eqref{sigmaadecuado}  with a   uniform $\beta_{i}$, etc., it is necessary to take the  $\varepsilon_{i}$'s  ever smaller. In \cite{Jeo3} we will give conditions on the families for obtain uniform neighborhoods.

\medskip
 
\noindent  \textbf{Acknowledgements.} The present work was carried out with the support of   the Conselho Nacional de Desenvolvimento Cient\'ifico  e Tecnol\'ogico - Brasil (CNPq) and the Coordena\c c\~ao de Aperfei\c coamento de Pessoal de N\'ivel Superior  (CAPES).  The author would like to thank the institutions  Universidade de S\~ao Paulo (USP) and Instituto de Matem\'atica Pura e Aplicada (IMPA) for their   hospitality    during the course of the writing. Special thanks for A. Fisher, my doctoral advisor, who has inspired and aided me along the way.

\end{document}